\documentclass[11pt]{amsart}

\usepackage[T1]{fontenc}
\usepackage{lmodern,amssymb,mathtools,microtype, mathrsfs}
\usepackage{fullpage}
\usepackage[colorlinks=true,linkcolor=blue,citecolor=blue,urlcolor=blue]{hyperref}

\newcommand{\K}{\mathbb K}
\newcommand{\R}{\mathbb R}
\newcommand{\C}{\mathbb C}
\newcommand{\ud}{\,\mathrm d}
\newcommand{\nrm}[1]{\lVert#1\rVert}
\newcommand{\abs}[1]{\lvert#1\rvert}
\newcommand{\LL}{\mathscr L}
\newcommand{\DD}{\mathscr D}
\newtheorem{theorem}{Theorem}[section]
\newtheorem{corollary}[theorem]{Corollary}
\newtheorem{lemma}[theorem]{Lemma}
\newtheorem{proposition}[theorem]{Proposition}
\theoremstyle{remark}
\newtheorem{remark}[theorem]{Remark}

\newcommand{\E}{\mathbb E}
\newcommand{\PP}{\mathbb P}
\DeclareMathOperator{\sgn}{sgn}
\numberwithin{equation}{section}

\title[Hilbert transform and UMD constants]{The bounds $ \hbar_{p,X}\lesssim(\beta_{p,X})^2$ and $\beta_{p,X}\lesssim(\hbar_{p,X})^2$ are sharp}

\author{Emiel Lorist and Jan van Neerven }

\address[Emiel Lorist and Jan van Neerven]{\hfill\break\indent
Delft Institute of Applied Mathematics \hfill\break\indent
Delft University of Technology \hfill\break\indent
P.O. Box 5031 \hfill\break\indent
2600 GA Delft, The Netherlands}

\email{e.lorist@tudelft.nl}
\email{j.m.a.m.vanneerven@tudelft.nl}

\thanks{The first  author has received funding from the Veni subsidy \href{https://doi.org/10.61686/ZGRMR99948}{VI.Veni.242.057} of the Netherlands Organisation for Scientific Research (NWO)}

\usepackage[dvipsnames]{xcolor}

\begin{document}

\begin{abstract}
It was proved in the 1980s by Burkholder and Bourgain that, for any Banach space $X$ and $1<p<\infty$, the UMD$_p$ property for $X$ is equivalent to boundedness of the Hilbert transform on $L^p(\R;X)$, and that the UMD constant $\beta_{p,X}$ and the Hilbert transform constant $\hbar_{p,X}$ are related by the quadratic bounds
\begin{equation*}
 \hbar_{p,X}\lesssim(\beta_{p,X})^2,
 \qquad
 \beta_{p,X}\lesssim(\hbar_{p,X})^2.
\end{equation*}
In this paper we present examples showing that both bounds are sharp.
More precisely, we construct explicit $2^n$-dimensional
Banach spaces for which the Hilbert transform constant grows like $n$
and the UMD constant like $\sqrt n$, and a second family with the reverse
behaviour. 
\end{abstract}

\subjclass[2020]{Primary 46B20; Secondary 42A50, 46B09, 60G46}
\keywords{UMD spaces, Hilbert transform, martingale transforms, sharp constants,
dyadic martingales, finite-dimensional Banach spaces}

\maketitle

\section{Introduction}

The equivalence between the UMD property and boundedness of the vector-valued Hilbert transform connects unconditionality of martingale differences with singular integrals, linking probability and harmonic analysis.
The classical proofs give quadratic comparisons between the two associated constants, and it has long been an open problem whether these can be improved to linear bounds. In this paper we construct explicit finite-dimensional Banach spaces showing that both quadratic dependencies are optimal.

Let $X$ be a Banach space over $\K\in\{\R,\C\}$ and let $p\in(1,\infty)$. For $f\in C_{\rm c}^1(\R;X)$, its \emph{Hilbert transform} is defined by
\begin{equation*}
 Hf(x):=\frac1\pi\lim_{\varepsilon\downarrow0}
 \int_{\abs{x-y}>\varepsilon}\frac{f(y)}{x-y}\ud y,
 \qquad x\in\R.
\end{equation*}
The \emph{Hilbert transform constant} $\hbar_{p,X}$ is the least constant $C$ such that
\begin{equation*}
 \nrm{Hf}_{L^p(\R;X)}\le C\nrm{f}_{L^p(\R;X)}
 \qquad f\in C_{\rm c}^1(\R;X).
\end{equation*}
The \emph{UMD constant} $\beta_{p,X}$ is the least constant $C$ such that
\begin{equation}\label{eq:umd-constant}
 \Big\lVert\sum_{k=1}^m\epsilon_k\,df_k\Big\rVert_{L^p(S;X)}
 \le C\Big\lVert\sum_{k=1}^m df_k\Big\rVert_{L^p(S;X)}, \qquad df_k:=f_k-f_{k-1}
\end{equation}
for every $X$-valued $L^p$-martingale $(f_k)_{k=0}^m$ adapted to a $\sigma$-finite filtration $(\mathscr F_k)_{k=0}^m$ on a $\sigma$-finite measure space $(S,\mathscr A,\mu)$ and  all scalars $\epsilon_k\in\K$ with $\abs{\epsilon_k}=1$. Finiteness of either constant for one $p\in(1,\infty)$ is equivalent to its finiteness for every $p\in(1,\infty)$. A space for which $\beta_{p,X}<\infty$ is called a {\em UMD space}.

The scalar theory of the Hilbert transform goes back to M. Riesz's theorem on conjugate functions \cite{Riesz1928}. The Banach space geometry underlying its vector-valued extension emerged from Burkholder's work on martingale transforms and his geometric characterisation of UMD spaces \cite{Burkholder1981}. Burkholder, with a contribution of McConnell, proved that the UMD property implies boundedness of the Hilbert transform \cite{Burkholder1983}. The converse was proven by Bourgain \cite{Bourgain1983}. Quantitatively, these results give the bounds
\begin{equation}\label{eq:quadratic-comparison}
 \hbar_{p,X}\leq 2(\beta_{p,X})^2,
 \qquad \beta_{p,X}\leq 2 (\hbar_{p,X})^2;
\end{equation}
see also \cite[Corollary 5.2.11 and Proposition 4.2.10]{HNVW1}. 

The question whether the powers in \eqref{eq:quadratic-comparison} can be reduced, in particular whether
\[
 \hbar_{p,X}\lesssim_p\beta_{p,X}
 \qquad\text{and/or}\qquad
 \beta_{p,X}\lesssim_p\hbar_{p,X},
\]
has been a longstanding problem; see Burkholder \cite[p. 249]{Burkholder2001} and \cite[Problem O.6]{HNVW1}. The scalar sharp constants offer a natural reason to expect a linear relation. Writing $p^*:=\max\{p,p/(p-1)\}$, the results of Pichorides \cite{Pichorides1972} and Burkholder \cite{Burkholder1984} give
\[
 \hbar_{p,\R}=\cot\frac{\pi}{2p^*},
 \qquad \beta_{p,\R}=p^*-1.
\]
These quantities, and thus also $\hbar_{p,\C}$ and $\beta_{p,\C}$,  are comparable by absolute constants for $1<p<\infty$. Indeed, they satisfy
\[
 \tfrac2\pi\beta_{p,\R} \le  \hbar_{p,\R} \le  \beta_{p,\R}.
\]
The linear dependence problem asks whether the two constants admit linear comparisons that are uniform over all Banach spaces $X$ at a fixed $p$. This differs from determining their sharp dependence on $p$ for a fixed space $X$.

We resolve both directions negatively, already at $p=2$. We construct one family of Banach spaces that has Hilbert transform constant of the order of the square of its UMD constant; a second family has the reverse behaviour. Thus neither square in \eqref{eq:quadratic-comparison} can be replaced by any smaller exponent, even if the comparison constant is allowed to depend on $p$.

\subsection{The construction and main result}

The examples are built on a finite dyadic tree, whose leaves index the coordinates of $\K^N$, and its intermediate nodes collect consecutive coordinates into blocks of successively larger size. For an integer $n\ge1$, put $N=2^n$ and let
\[
 \DD_n:=\bigcup_{s=0}^n \big\{\{(j-1)2^{n-s}+1,\ldots,j2^{n-s}\}:1\le j\le2^s\big\}.
\]
At level $s$, these blocks partition $\{1,\ldots,N\} = \{1,\ldots,2^n\}$ into $2^s$ consecutive intervals, each containing $2^{n-s}$ indices. The $\sigma$-algebras generated by these partitions form a finite dyadic filtration. 

We first introduce a norm which records the sum of the coordinates over every dyadic block. Let $H_n$ be $\K^N$ equipped with the Hilbert norm
\[
 \nrm{x}_{H_n}:= \Big(\sum_{I\in\DD_n}\Big\lvert\sum_{j\in I}x_j\Big\rvert^2\Big)^{1/2}.
\]
Thus $\nrm{x}_{H_n}$ is the Euclidean norm of the vector of dyadic block sums of $x$. Since the singleton blocks are included, this map is injective, and hence the displayed expression defines a Hilbert norm. Each coordinate belongs to exactly one block at each of the $n+1$ levels. This observation explains the factor $\sqrt{n+1}$ which repeatedly appears below.
We combine this Hilbert norm with the $\ell^1$-norm by setting $E_n:=\ell^1_N+H_n$, with norm
\begin{equation*}
 \nrm{x}_{E_n}:= \inf_{\substack{u,v\in\K^N\\u+v=x}}
 \big(\nrm{u}_{\ell^1_N}+\nrm{v}_{H_n}\big).
\end{equation*}
This decomposition lets us apply the matrix estimates below to the $\ell^1_N$ part. For the $H_n$ part, we will use that scalar $L^2$-contractions extend contractively to $H_n$-valued functions.

We will use two families of matrices in our construction. The first are the summation matrices $\Sigma_N$, which record successive partial sums of the coordinates. They are given by
\begin{equation*}
 \Sigma_N:=
 \begin{pmatrix}
 1&0&\cdots&0\\
 1&1&\ddots&\vdots\\
 \vdots&\vdots&\ddots&0\\
 1&\cdots&1&1
 \end{pmatrix}
 \end{equation*}
 The second family of matrices $D_n$ assign alternating signs to the interactions between the two halves of a dyadic block. We define these recursively by
\begin{equation}\label{eq:matricesD}
 D_0:=(1),\qquad
 D_s:=\begin{pmatrix}
 D_{s-1}&(-1)^sJ_{2^{s-1}}\\
 (-1)^sJ_{2^{s-1}}&D_{s-1} 
 \end{pmatrix},\qquad s=1,\ldots,n,
\end{equation}
where $J_d$ is the $d\times d$ matrix with all entries equal to one. In the recursion for $D_n$, each diagonal block repeats the preceding construction, while the two off-diagonal blocks couple the two halves with the common sign $(-1)^n$. 

Finally, we equip $\K^N$ with the norms
\begin{equation}\label{eq:spaces}
 \begin{split}
 \nrm{x}_{X_n}&:=\max\{\nrm{\Sigma_Nx}_{\ell^\infty_N},\nrm{x}_{E_n}\},\\
 \nrm{x}_{Y_n}&:=\max\{\nrm{D_nx}_{\ell^\infty_N},\nrm{x}_{E_n}\}.
 \end{split}
\end{equation}
The matrix term allows us to transfer the operator lower bounds to $X_n$ and $Y_n$. The upper bounds also use the Hilbert component, see Proposition \ref{prop:transfer} below.

\begin{theorem}\label{thm:main}
For every $n\ge1$, the $N$-dimensional spaces $X_n$ and $Y_n$ satisfy
\begin{align*}
 \hbar_{2,X_n}&\eqsim n,
 &\beta_{2,X_n}&\eqsim\sqrt n,\\
 \beta_{2,Y_n}&\eqsim n,
 &\hbar_{2,Y_n}&\eqsim\sqrt n.
\end{align*}
The comparison constants are universal.
\end{theorem}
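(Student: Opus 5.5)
We split the theorem into four upper bounds and four lower bounds, all derived from a transfer principle. The principle will be a version of Proposition~\ref{prop:transfer}. It says that if $Z=\K^N$ carries a norm of the form $\max\{\nrm{Ax}_{\ell^\infty_N},\nrm{x}_{E_n}\}$, then for a linear operator $T$ on scalar functions the norm of $T\otimes I_Z$ on $L^2(S;Z)$ is controlled in two ways. From above, we bound it by the scalar norm of $T$ on $L^2$ (this handles the $H_n$ part) together with the behaviour of $T$ on the $\ell^1$ and $\ell^\infty$-type components. From below, we bound it by the norm of the matrix-induced operator on $\ell^\infty$-valued functions.

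\emph{Upper bounds.} The plan has three steps.
\begin{enumerate}
\item We first show that $\nrm{x}_{Z}\eqsim$ a norm which on $\ell^1$ is $\ell^1$ and on $H_n$ is Hilbertian up to the factor $\sqrt{n+1}$. For this we check $\nrm{\Sigma_N x}_\infty\le\nrm{x}_{\ell^1}$ and $\nrm{\Sigma_N v}_\infty\le \sqrt{n+1}\,\nrm{v}_{H_n}$. The second estimate holds because every partial sum $\sum_{j\le k}v_j$ is a sum of at most one dyadic block per level. In the same way $\nrm{D_nv}_\infty\le\sqrt{n+1}\nrm{v}_{H_n}$, since row $i$ of $D_n$ is $\pm$ the sum over the sibling blocks along the path to $i$.
\item Hence $E_n\subseteq Z$ with $\nrm{x}_{E_n}\le\nrm{x}_Z\le\sqrt{n+1}\,\nrm{x}_{E_n}$. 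It follows that $\hbar_{2,Z}\le \sqrt{n+1}\,\hbar_{2,E_n}$ and $\beta_{2,Z}\le\sqrt{n+1}\,\beta_{2,E_n}$.
\item This alone only gives crude bounds, because $\ell^1$ is not UMD. So we need more structure. The $\Sigma_N$-term must be compatible with martingale transforms (giving $\beta_{2,X_n}\lesssim\sqrt n$), and the $D_n$-term must be compatible with the Hilbert transform (giving $\hbar_{2,Y_n}\lesssim\sqrt n$). For $\beta_{2,X_n}$, I would write the norm dually. The row functionals $\sum_{j\le k}x_j$ decompose along the dyadic tree into at most $n+1$ block sums, and we apply the scalar $L^2$ martingale transform to each block-sum coordinate. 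Each block-sum coordinate is bounded by the $H_n$ norm, and the square function then gives $\sqrt{n+1}$ rather than $n+1$. The $D_n$ case for $\hbar$ is analogous: the signs $(-1)^s$ are constant on levels, and the Hilbert transform commutes with this decomposition.
\end{enumerate}
The remaining two upper bounds, $\hbar_{2,X_n},\beta_{2,Y_n}\lesssim n$, then follow from the quadratic comparison \eqref{eq:quadratic-comparison}, or directly by applying the triangle inequality over the $n+1$ levels.

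\emph{Lower bounds.} Test functions are built so that the $\ell^\infty$-term dominates after the transform while the input is small in $E_n$.
\begin{enumerate}
\item For $\hbar_{2,X_n}\gtrsim n$, choose $f=\sum_j \phi_j e_j$, where $\phi_j$ are translated bumps. Then $\Sigma_N Hf$ at suitable points behaves like a discrete Hilbert matrix acting on $(1,\dots,1)$, which has size $\log N\eqsim n$. Meanwhile $\nrm{f}_{X_n}\lesssim 1$ pointwise, because at each point only one coordinate is active and $\nrm{e_j}_{X_n}\le 1$.
\item For $\beta_{2,Y_n}\gtrsim n$, take the dyadic Haar martingale on $[0,1)$ whose increments at level $s$ are aligned with the block structure of $D_n$. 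Choose the transforming signs $\epsilon_s=(-1)^s$, so that they undo the alternation in $D_n$. Then some row of $D_n$ sums $n$ aligned contributions, while the untransformed martingale stays bounded.
\item The lower bounds $\gtrsim\sqrt n$ for the other two constants follow from the quadratic comparison \eqref{eq:quadratic-comparison} together with the $\gtrsim n$ bounds and the $\lesssim\sqrt n$ upper bounds. For instance, $n\lesssim\hbar_{2,X_n}\le 2\beta_{2,X_n}^2$ gives $\beta_{2,X_n}\gtrsim\sqrt n$.
\end{enumerate}

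The main obstacle will be the sharp upper bounds $\beta_{2,X_n}\lesssim\sqrt n$ and $\hbar_{2,Y_n}\lesssim\sqrt n$. Here one must show that the $\ell^1$ component and the matrix component interact with only a $\sqrt n$ loss. I would first attempt this via the decomposition $x=u+v$, using $\ell^1$ for $u$ and keeping the level-wise orthogonality in $H_n$ for $v$.
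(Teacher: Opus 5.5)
\textbf{Verdict: there is a genuine gap.} Your overall architecture matches the paper's:
\begin{itemize}
\item the splitting $f=u+v$, with $u$ measured in $\ell^1_N$ and $v$ in $H_n$, is exactly Proposition~\ref{prop:transfer};
\item your lower-bound test functions (the basis vector $e_j$ on the $j$th subinterval, with level-alternating signs for $D_n$) are those of Lemma~\ref{lem:lower};
\item the remaining bounds via \eqref{eq:quadratic-comparison}, or via the crude bound $n+1$ from your step~2, are fine.
\end{itemize}
One small imprecision in the $\hbar$ lower bound: what you need is the one-sided sum selected by the row $i$ just to the left of $t$. The full discrete Hilbert matrix acting on $(1,\dots,1)$ would largely cancel.

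The gap is your step~3, which is the heart of the theorem. After the splitting, the $v$-part costs only $\sqrt{n+1}$. The $u$-part, however, is precisely the operator estimate of Proposition~\ref{prop:upper}: $\nrm{\Sigma_N U}_{\LL(L^2(\Omega;\ell^1_N),L^2(\Omega;\ell^\infty_N))}\lesssim\sqrt n$ uniformly over martingale transforms $U$, and $\nrm{D_nH}_{\LL(L^2(\R;\ell^1_N),L^2(\R;\ell^\infty_N))}\lesssim\sqrt n$. Your sketch does not prove this. Write a row as at most $n+1$ block terms $Uu_I$, with $u_I=\sum_{j\in I}u_j$, and apply Cauchy--Schwarz. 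This gives $\sqrt{n+1}\,\bigl(\sum_{I\in\DD_n}\nrm{Uu_I}_{L^2}^2\bigr)^{1/2}$. Since pointwise $\sum_{I\in\DD_n}\abs{u_I}^2\le(n+1)\nrm{u}_{\ell^1_N}^2$, with equality when $u$ takes values in $\{e_j\}$, you only get $(n+1)\nrm{u}_{L^2(\Omega;\ell^1_N)}$, which is the crude bound again.

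No refinement along these lines can work. The ingredients you name are shared by $H$ and by martingale transforms: block-sum decomposition, coordinatewise action commuting with the matrix, and $L^2$-contractivity or orthogonality. Your remark that ``the Hilbert transform commutes with this decomposition'' holds equally for every $U$. So such an argument would also give $\hbar_{2,\Sigma_N}\lesssim\sqrt n$ and $\beta_{2,D_n}\lesssim\sqrt n$. That contradicts the lower bounds $\hbar_{2,\Sigma_N}\ge n/7$ and $\beta_{2,D_n}\ge 2n/3$, which you prove yourself.

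What is missing is a mechanism that distinguishes the two transforms.
\begin{itemize}
\item The paper writes $\Sigma_N=\tfrac12(J_N+I+K_N)$ with $K_N=(\sgn(i-j))_{i,j}$ antisymmetric, and $D_n=I+(D_n-I)$ with $D_n-I$ symmetric.
\item Antisymmetry produces $u\,Uv-v\,Uu$, where products of martingale increments cancel. This makes it a martingale with square function at most $(u^*+(Uu)^*)Sv+(v^*+(Uv)^*)Su$.
\item Symmetry produces $uHv+vHu$, which the polarised Cotlar identity $H(uHv+vHu)=(Hu)(Hv)-uv$ handles. Both cancellations are recorded in Lemma~\ref{lem:cancellation}.
\end{itemize}
These identities control the cross terms when the cubic energy $\int_S\sum_if_i\abs{(TRf)_i}^2\ud\mu$, for $f\ge0$, is built up one tree level at a time. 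The inherited common value $b_I$ is never estimated directly. Instead $R$ is moved onto $w_I=u_IRv_I+\eta v_IRu_I$ via $\int_S(Ra)b\ud\mu=-\eta\int_Sa(Rb)\ud\mu$, so each level adds only $O(\nrm{f}_{L^3(S;\ell^1_N)}^3)$.

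This forces working in $L^3$ rather than $L^2$. Lemma~\ref{lem:cubic} converts the energy bound into an $L^3(\ell^1_N)\to L^3(\ell^\infty_N)$ bound of order $\sqrt n$, and duality plus Riesz--Thorin return to $L^2$. Without an ingredient of this kind, your proposal establishes all lower bounds and the order-$n$ upper bounds. It does not establish the two sharp upper bounds $\beta_{2,X_n}\lesssim\sqrt n$ and $\hbar_{2,Y_n}\lesssim\sqrt n$, which are the new content of the theorem.
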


Recalling that $N = 2^n$, the larger constant in each family grows like $\log N$, whereas the smaller one grows like $\sqrt{\log N}$. The proof of Theorem \ref{thm:main} has been checked in Lean 4 \cite{LN26Lean}.

\begin{remark}
Tracking constants, for every $n\ge1$ one has, over either scalar field,
\begin{align*}
 \frac n7\le\hbar_{2,X_n}&\le n+1,\\
 \sqrt{\frac n{14}}\le\beta_{2,X_n}
 &\le\min\{n+1,C_{\K}\sqrt{n+1}\},\\
 \frac{2n}3\le\beta_{2,Y_n}&\le n+1,\\
 \sqrt{\frac n3}\le\hbar_{2,Y_n}
 &\le\min\{n+1,40\sqrt{n+1}\},
\end{align*}
where one may take $C_{\R}=176$ and $C_{\C}=351$. These constants differ slightly from those in the Lean 4 formalization \cite{LN26Lean}, which uses different quantitative versions of some of the estimates.
\end{remark}

By the quantitative extrapolation theorems \cite[Theorem 4.2.7]{HNVW1} and \cite[Theorem 11.2.5]{HNVW3}, Theorem \ref{thm:main} immediately gives the following corollary.

\begin{corollary}\label{cor:all-p}
For every fixed $p\in(1,\infty)\setminus\{2\}$ and all $n\ge1$,
\begin{align*}
 \hbar_{p,X_n}&\eqsim_p n,
 &\beta_{p,X_n}&\eqsim_p\sqrt n,\\
 \beta_{p,Y_n}&\eqsim_p n,
 &\hbar_{p,Y_n}&\eqsim_p\sqrt n.
\end{align*}
The comparison constants depend only on $p$.
\end{corollary}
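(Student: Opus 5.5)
The statement to prove is Corollary~\ref{cor:all-p}; I will deduce it from Theorem~\ref{thm:main} by quantitative extrapolation in $p$, separately for $\hbar$ and for $\beta$.

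\emph{Hilbert transform.} By \cite[Theorem 11.2.5]{HNVW3}, for every Banach space $X$ and $1<p,q<\infty$,
\[
\hbar_{p,X}\lesssim_{p,q}\hbar_{q,X}.
\]
This is a Calder\'on--Zygmund extrapolation. The vector-valued weak-type $(1,1)$ bound for $H$ depends linearly on the $L^q$ bound and on the kernel constants, and the kernel $1/(\pi(x-y))$ is scalar and independent of $X$. Marcinkiewicz interpolation then gives the range $p<q$. Duality gives $p>q$, using $\hbar_{p,X^*}=\hbar_{p',X}$ and the fact that the extrapolation holds for all spaces, applied to the dual space.

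Taking $q=2$ in both directions yields
\[
\hbar_{p,X}\eqsim_p\hbar_{2,X}
\]
uniformly in $X$. Applying this with $X=X_n$ gives $\hbar_{p,X_n}\eqsim_p n$, and with $X=Y_n$ gives $\hbar_{p,Y_n}\eqsim_p\sqrt n$.

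\emph{UMD constant.} By \cite[Theorem 4.2.7]{HNVW1}, the Burkholder--Gundy--Maurey extrapolation for martingale transforms, the bound
\[
\beta_{p,X}\lesssim_{p,q}\beta_{q,X}
\]
holds for all $X$ and $1<p,q<\infty$. Again the comparison is linear, so it yields
\[
\beta_{p,X}\eqsim_p\beta_{2,X}.
\]
Inserting the two-sided estimates for $\beta_{2,X_n}$ and $\beta_{2,Y_n}$ from Theorem~\ref{thm:main} gives $\beta_{p,X_n}\eqsim_p\sqrt n$ and $\beta_{p,Y_n}\eqsim_p n$.

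\emph{Main point to check.} The key requirement is that both extrapolation results are linear in the constant: $\beta_{p,X}\le C_{p,q}\,\beta_{q,X}$ with $C_{p,q}$ independent of $X$, and likewise for $\hbar$. Otherwise the growth rates in $n$ would be distorted when passing from $p=2$ to general $p$. I would confirm linearity directly from the good-$\lambda$ / Davis decomposition argument for $\beta$ and from the Calder\'on--Zygmund decomposition for $\hbar$. Each of these gives a weak $L^1$ (respectively weak $L^r$) bound proportional to the strong $L^q$ constant, and Marcinkiewicz interpolation preserves this linear dependence.
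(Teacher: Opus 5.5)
Your proposal is correct and follows the paper's route: the paper deduces the corollary directly from Theorem~\ref{thm:main} via the same two quantitative extrapolation results, \cite[Theorem 4.2.7]{HNVW1} for $\beta$ and \cite[Theorem 11.2.5]{HNVW3} for $\hbar$, which give $\beta_{p,X}\eqsim_p\beta_{2,X}$ and $\hbar_{p,X}\eqsim_p\hbar_{2,X}$ uniformly in $X$. One fine point in your sketch: Calder\'on--Zygmund extrapolation a priori yields only $\hbar_{p,X}\lesssim_p\hbar_{2,X}+1$, because the kernel constants enter additively, and this becomes linear because $\hbar_{2,X}\ge\hbar_{2,\K}=1$.
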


\subsection{Relation with earlier work and outline of the proof}

The summation operators $\Sigma_N:\ell^1_N\to\ell^\infty_N$ used in our construction were proposed by Wenzel \cite{Wenzel2004} as candidates for separating Hilbert transform and martingale transform bounds. He also studied their UMD constants. Combining our estimates with the known lower bound for the UMD constant gives
\[
 \beta_{2,\Sigma_N}\eqsim\sqrt{\log N},
 \qquad \hbar_{2,\Sigma_N}\eqsim\log N,
 \qquad N=2^n,\quad n\ge1.
\]
Section \ref{sec:reduction} defines these operator constants and transfers the estimates to $X_n$, preserving both orders of growth. Applying the same construction to the matrices $D_n$ gives the spaces $Y_n$, with the reverse separation.

Dyadic models provide another approach to the comparison problem. Petermichl \cite{Petermichl2000} represented the Hilbert transform, up to a non-zero absolute factor, as an average of translates and dilates of a Haar shift. Together with the estimates of Petermichl and Pott \cite{PetermichlPott2003}, this gives a dyadic proof of the quadratic upper bound $\hbar_{p,X}\lesssim(\beta_{p,X})^2$. More recently, Domelevo and Petermichl \cite{DP2022,DP2023,DP2026} obtained a linear comparison between the Hilbert transform and the dyadic shift defined on the Haar system of the real line by
\[
 S_0h_{I_+}=h_{I_-},\qquad S_0h_{I_-}=-h_{I_+},
\]
where $I_-$ and $I_+$ are the two children of a dyadic interval $I$. Unlike the classical shift, which sends a Haar function to a signed combination of its children, $S_0$ exchanges sibling Haar functions at the same scale. Writing $s_{p,X}$ for its norm on $L^p(\R;X)$, their results give
\[
 c_0s_{p,X}\le\hbar_{p,X}\le s_{p,X},
 \qquad c_0=\frac{8G}{\pi^2},
\]
where $G$ is Catalan's constant \cite[Theorems 2.1 and 2.2]{DP2026}. Thus the linear dependence problem reduces to comparing $S_0$ with martingale transforms. Theorem \ref{thm:main}  shows that quadratic comparisons between $s_{p,X}$ and $\beta_{p,X}$ are sharp in both directions.

For a real Banach space $X$, Geiss, Montgomery-Smith and Saksman \cite[Theorem 1.1]{GMS2010} proved the exact identities
\[
 \nrm{R_1^2-R_2^2}_{\LL(L^p(\R^2;X))} = \nrm{2R_1R_2}_{\LL(L^p(\R^2;X))} = \beta_{p,X},
\]
where $R_1,R_2$ are the Riesz transforms. 
More generally, \cite[Theorem 3.1]{GMS2010}  compares the norms of Fourier multipliers with nonconstant smooth real even symbols, homogeneous of degree zero, with the UMD constant. Furthermore, \cite[Theorem 4.1]{GMS2010}  compares the norms of Fourier multipliers with symbols $im$, where $m$ is nonzero, smooth, real, odd and homogeneous of degree zero, with the Hilbert transform constant. Combined with Theorem \ref{thm:main}, these results show that the even and odd multiplier classes can have a quadratic gap in either direction. Moreover, it was shown by Pott and Stoica \cite{PottStoica2014} that linear upper estimates  hold for sufficiently smooth Calder\'on--Zygmund operators with even kernels and cancellation. Again, the family of Banach spaces $X_n$ in Theorem \ref{thm:main} shows that such a conclusion cannot hold for general Calder\'on--Zygmund operators solely in terms of $\beta_{p,X}$.

\medskip

The proof of Theorem \ref{thm:main} uses only finite-dimensional constructions and scalar estimates. Section \ref{sec:reduction} reduces the assertions to bounds for the two matrices acting from $\ell^1_N$ to $\ell^\infty_N$. Section \ref{sec:matrix-estimates} proves these bounds by a common calculation over the dyadic tree. A martingale cancellation identity and the polarised Cotlar identity play parallel roles in the upper estimates; positivity, duality and interpolation then give the required $L^2$ bounds. Testing with explicit step functions gives the lower estimates. 

\section{Reduction to operator estimates}\label{sec:reduction}

We first separate the matrix estimates from the construction of the Banach spaces. The matrices will be viewed as operators from $\ell^1_N$ to $\ell^\infty_N$. A bound for a scalar transform followed by one of these matrices is then transferred to a bound for the same transform on $X_n$ or $Y_n$. The additional cost of this passage will be of order $\sqrt n$, which is precisely the smaller scale in Theorem \ref{thm:main}.

\subsection{Operator constants and the dyadic formulation}

Let $X,Y$ be Banach spaces, $T\in\LL(X,Y)$, and $p\in(1,\infty)$. The Hilbert transform constant with respect to $T$, denoted by $\hbar_{p,T}$, is the least constant $C$ such that
\begin{equation}\label{eq:operator-hilbert}
 \nrm{H(Tf)}_{L^p(\R;Y)}\le C\nrm{f}_{L^p(\R;X)}
 \qquad f\in C_{\rm c}^1(\R;X).
\end{equation}
The UMD constant with respect to $T$, denoted by $\beta_{p,T}$, is the least constant $C$ such that
\begin{equation}\label{eq:operator-umd}
 \Big\lVert\sum_{k=1}^m\epsilon_kT\,df_k\Big\rVert_{L^p(S;Y)}
 \le C\Big\lVert\sum_{k=1}^m df_k\Big\rVert_{L^p(S;X)}
\end{equation}
for all martingales and scalars as in \eqref{eq:umd-constant}. Taking $T$ to be the identity operator on $X$  recovers the constants of $X$.

In \eqref{eq:operator-umd}, the right-hand side is given by $\nrm{f_m-f_0}_{L^p(S;X)}$. Replacing this by $\nrm{f_m}_{L^p(S;X)}$ gives the same least constant.
For each filtration, write $E_k=\E(\,\cdot\mid\mathscr F_k)$ and $\Delta_k=E_k-E_{k-1}$. The UMD constant has the equivalent operator-norm formulation
\begin{equation}\label{eq:operator-supremum}
 \beta_{p,T}=\sup\Big\lVert T\sum_{k=1}^m\epsilon_k\Delta_k\Big\rVert_{\LL(L^p(S;X),L^p(S;Y))},
\end{equation}
where the supremum is over all measure spaces, filtrations, and scalars as in \eqref{eq:operator-umd}. Moreover, the supremum is unchanged if we restrict to $[0,1)$ with Lebesgue measure and its standard dyadic filtration. These reductions follow from the proofs of \cite[Lemma 4.2.8 and Theorem 4.2.5]{HNVW1}, which apply with $T$ in place of the identity operator.

\subsection{Estimates for dyadic block sums}

The next lemma records how the auxiliary norms $H_n$ and $E_n$ compare, and why both matrices fit the same construction. Its main geometric input is that a row of either matrix can be described using at most $n+1$ dyadic block sums. Applying the Cauchy--Schwarz inequality to these sums costs only $\sqrt{n+1}$.

\begin{lemma}\label{lem:geometry}
For $n\geq 1$, $x\in\K^N$ and $T\in\{\Sigma_N,D_n\}$ we have
\begin{align}\label{eq:En-comparison}
 \frac{1}{\sqrt{n+1}}\nrm{x}_{H_n}
 \le\nrm{x}_{E_n}&\le\min\{\nrm{x}_{\ell^1_N},\nrm{x}_{H_n}\},\\
 \nrm{Tx}_{\ell^\infty_N}&\le\sqrt{n+1}\nrm{x}_{H_n}.
\end{align}
\end{lemma}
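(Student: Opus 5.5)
The proof splits into three estimates: the comparison of $\nrm{x}_{H_n}$ with $\nrm{x}_{\ell^1_N}$, the resulting two-sided bound on $\nrm{x}_{E_n}$, and the row-wise bound for $T$.

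\textbf{Comparing $H_n$ with $\ell^1_N$.} We first show $\nrm{x}_{H_n}\le\sqrt{n+1}\,\nrm{x}_{\ell^1_N}$. By the triangle inequality it suffices to check this on unit vectors $e_j$. Since $e_j$ lies in exactly one block at each of the $n+1$ levels, its block sums consist of $n+1$ ones and zeros elsewhere. Hence $\nrm{e_j}_{H_n}=\sqrt{n+1}$.

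\textbf{Both bounds on $\nrm{x}_{E_n}$.} The upper bound $\nrm{x}_{E_n}\le\min\{\nrm{x}_{\ell^1_N},\nrm{x}_{H_n}\}$ follows from the two trivial decompositions $(u,v)=(x,0)$ and $(0,x)$. For the lower bound, take any decomposition $x=u+v$. Then
\[
\nrm{x}_{H_n}\le\nrm{u}_{H_n}+\nrm{v}_{H_n}\le\sqrt{n+1}\,\nrm{u}_{\ell^1_N}+\nrm{v}_{H_n}\le\sqrt{n+1}\,\big(\nrm{u}_{\ell^1_N}+\nrm{v}_{H_n}\big).
\]
Taking the infimum over all decompositions gives $\nrm{x}_{H_n}\le\sqrt{n+1}\,\nrm{x}_{E_n}$.

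\textbf{Reducing the $T$-estimate to rows.} For $\nrm{Tx}_{\ell^\infty_N}\le\sqrt{n+1}\nrm{x}_{H_n}$, fix a row index $i$. The plan is to write the $i$-th entry as
\[
(Tx)_i=\sum_{I\in\mathcal F_i}\sigma_I\sum_{j\in I}x_j,
\]
where $\mathcal F_i\subseteq\DD_n$ has at most $n+1$ blocks and the coefficients satisfy $\abs{\sigma_I}\le1$. Cauchy--Schwarz then gives $\abs{(Tx)_i}\le\sqrt{\#\mathcal F_i}\,\nrm{x}_{H_n}$, which is the claim.

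\textbf{Rows of $\Sigma_N$.} The $i$-th entry of $\Sigma_Nx$ is $\sum_{j\le i}x_j$. The set $\{1,\dots,i\}$ is a disjoint union of dyadic blocks, at most one per level, read off from the binary expansion of $i$. At each level $s$ we include the block of size $2^{n-s}$ exactly when the corresponding binary digit of $i$ is $1$, taking the blocks consecutively from the left. This uses at most $n+1$ blocks, all with $\sigma_I=1$.

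\textbf{Rows of $D_n$.} We unwind the recursion \eqref{eq:matricesD}. Row $i$ of $D_n$ lies in one half, say the block $I$ at level $1$. That row equals the corresponding row of $D_{n-1}$ on $I$, plus $(-1)^n$ times the all-ones vector on the sibling $I'$ of $I$, which is a single dyadic block. Iterating, for each level $s=1,\dots,n$ the row receives the sibling at level $s$ of the block containing $i$, with sign $\pm1$. At the bottom, $D_0=(1)$ contributes the singleton $\{i\}$. This gives exactly $n+1$ blocks with signs $\pm1$.

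The main obstacle is the $D_n$ row bookkeeping, namely checking that the recursion places the sibling blocks correctly at every level. Since the signs are unimodular and the blocks are distinct, Cauchy--Schwarz applies directly regardless of the sign pattern.
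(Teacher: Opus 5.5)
Your proposal is correct and follows the paper's proof essentially step for step. The $E_n$ comparison uses $\nrm{e_j}_{H_n}=\sqrt{n+1}$ and the triangle inequality; the $\Sigma_N$ rows use the binary-expansion partition of $\{1,\ldots,i\}$; and the $D_n$ rows split into the singleton $\{i\}$ and the sibling blocks $I_{s-1}(i)\setminus I_s(i)$, which you obtain by unwinding the recursion, followed by Cauchy--Schwarz over at most $n+1$ distinct blocks. In the $\Sigma_N$ case, make explicit that the blocks are laid down in order of decreasing size, so each one starts at a multiple of its own length and is therefore dyadic.
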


\begin{proof}
Let $(e_j)_{j=1}^N$ be the standard basis of $\K^N$. For $x=e_j$, the block sum in the $H_n$-norm equals one on each of the $n+1$ blocks containing $j$ and zero on all other blocks. Hence $\nrm{e_j}_{H_n}=\sqrt{n+1}$. Writing $x=\sum_jx_je_j$ and applying the triangle inequality proves
\[
 \nrm{x}_{H_n}\le\sqrt{n+1}\nrm{x}_{\ell^1_N}.
\]
Therefore, for every decomposition $x=u+v$ we have
\[
 \nrm{x}_{H_n}\le\nrm{u}_{H_n}+\nrm{v}_{H_n}
 \le\sqrt{n+1}\big(\nrm{u}_{\ell^1_N}+\nrm{v}_{H_n}\big).
\]
Taking the infimum gives the lower bound in \eqref{eq:En-comparison}. The decompositions $x=x+0=0+x$ give the upper bounds.

For the second assertion, let $1\le i\le N$. The binary expansion of $i$ gives a partition of the initial segment $\{1,\ldots,i\}$ into disjoint dyadic blocks $I_1,\ldots,I_d\in\DD_n$, with $d\le n+1$.
The Cauchy--Schwarz inequality gives
\[
 \abs{(\Sigma_Nx)_i}
 =\Bigl|\sum_{s=1}^d\sum_{j\in I_s}x_j\Bigr|
 \le\sqrt{d}\,
 \Bigl(\sum_{s=1}^d\Bigl|\sum_{j\in I_s}x_j\Bigr|^2\Bigr)^{1/2}
 \le\sqrt{n+1}\,\nrm{x}_{H_n}.
\]
Next, to estimate $D_n$, for $0\le s\le n$ let $I_s(i)$ be the dyadic block of cardinality $2^{n-s}$ containing $i$. These blocks are nested, with $I_0(i)=\{1,\ldots,N\}$ and $I_n(i)=\{i\}$. Since $I_s(i)$ is one half of $I_{s-1}(i)$, the difference $I_{s-1}(i)\setminus I_s(i)$ is also a dyadic block. These differences are pairwise disjoint and partition $\{1,\ldots,N\}\setminus\{i\}$. By construction, we have $(D_n)_{ii}=1$ and $(D_n)_{ij}=(-1)^{n-s+1}$ whenever $j\in I_{s-1}(i)\setminus I_s(i)$. Therefore,
\begin{equation*}
 (D_nx)_i=x_i+\sum_{s=1}^n(-1)^{n-s+1}\sum_{j\in I_{s-1}(i)\setminus I_s(i)}x_j.
\end{equation*}
All $n+1$ blocks in this expression belong to $\DD_n$. Again by Cauchy--Schwarz
\[
 \abs{(D_nx)_i}
 \le\sqrt{n+1}\Big(\abs{x_i}^2 + \sum_{s=1}^n\Big\lvert \sum_{j\in I_{s-1}(i)\setminus I_s(i)}x_j \Big\rvert^2\Big)^{1/2} \le\sqrt{n+1}\nrm{x}_{H_n}.
\]
Taking the maximum over $i$ completes the proof.
\end{proof}

\subsection{Passing from operators to Banach spaces}
We now show how estimates for matrices acting from $\ell^1_N$ to $\ell^\infty_N$ give estimates for the Banach spaces constructed in \eqref{eq:spaces}. We will apply the next proposition with $T=\Sigma_N$ and thus $F=X_n$, or with $T=D_n$ and thus $F=Y_n$. In each case, the martingale and Hilbert transform constants of $F$ are bounded below by the corresponding operator constants of $T$. The proposition also gives upper bounds: passing from $T$ to $F$ adds at most a term of order $\sqrt n$, arising from the auxiliary norm $E_n$. 

\begin{proposition}\label{prop:transfer}
Let $T:\K^N\to\K^N$ be linear and suppose that
\[
 \nrm{T}_{\LL(\ell^1_N,\ell^\infty_N)}\le1,
 \qquad \nrm{T}_{\LL(H_n,\ell^\infty_N)}\le\sqrt{n+1}.
\]
Let $F$ be $\K^N$ equipped with the norm
\[
 \nrm{x}_F:=\max\{\nrm{Tx}_{\ell^\infty_N},\nrm{x}_{E_n}\}.
\]
Regarding $T$ as an operator from $\ell^1_N$ to $\ell^\infty_N$,
we have
\begin{equation}\label{eq:transfer-bounds}
 \begin{split}
 \beta_{2,T}&\le\beta_{2,F}
       \le\sqrt{(\beta_{2,T})^2+2(n+1)},\\
 \hbar_{2,T}&\le\hbar_{2,F}
       \le\sqrt{(\hbar_{2,T})^2+2(n+1)}.
 \end{split}
\end{equation}
\end{proposition}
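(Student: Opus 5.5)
Both lower bounds come from norm comparisons. By Lemma~\ref{lem:geometry} and the assumption on $T$, $\nrm{x}_{E_n}\le\nrm{x}_{\ell^1_N}$ and $\nrm{Tx}_{\ell^\infty_N}\le\nrm{x}_{\ell^1_N}$, so $\nrm{x}_F\le\nrm{x}_{\ell^1_N}$. Also $\nrm{Tx}_{\ell^\infty_N}\le\nrm{x}_F$ trivially. Given an $\ell^1_N$-valued martingale $f$ and signs $\epsilon_k$, we bound
\[
\Big\lVert\sum_k\epsilon_kT\,df_k\Big\rVert_{L^2(\ell^\infty_N)}\le\Big\lVert\sum_k\epsilon_k df_k\Big\rVert_{L^2(F)}\le\beta_{2,F}\nrm{f_m}_{L^2(F)}\le\beta_{2,F}\nrm{f_m}_{L^2(\ell^1_N)}.
\]
Hence $\beta_{2,T}\le\beta_{2,F}$, and the same chain with $H$ in place of the martingale transform gives $\hbar_{2,T}\le\hbar_{2,F}$.

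For the upper bounds, fix a linear operator $S$ acting on scalar functions that is either $\sum\epsilon_k\Delta_k$ on the dyadic filtration, using \eqref{eq:operator-supremum}, or $H$, and let $K$ denote $\beta_{2,T}$ or $\hbar_{2,T}$ accordingly. Take $f\in L^2(\R;\K^N)$ with $\nrm{f}_{L^2(F)}\le1$. Since $\nrm{Sf}_F^2\le\nrm{TSf}_{\ell^\infty_N}^2+\nrm{Sf}_{E_n}^2$ pointwise, it suffices to show
\[
\nrm{TSf}_{L^2(\ell^\infty_N)}^2+\nrm{Sf}_{L^2(E_n)}^2\le K^2+2(n+1).
\]
The $E_n$ term is the easier one. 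Since $S$ is a scalar $L^2$-contraction (unitary for $H$, an orthogonal sum of $\pm$/unimodular multiples of $\Delta_k$ for martingale transforms), it extends contractively to the Hilbert space $L^2(H_n)$. Hence
\[
\nrm{Sf}_{L^2(E_n)}\le\nrm{Sf}_{L^2(H_n)}\le\nrm{f}_{L^2(H_n)}\le\sqrt{n+1}\,\nrm{f}_{L^2(E_n)}\le\sqrt{n+1},
\]
using Lemma~\ref{lem:geometry}.

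The $T$ term is the main step. Decompose $f=u+v$ measurably, using a measurable selection of near-optimal decompositions for the $E_n$ infimum (finite dimensions, so this is fine), with $\nrm{u(x)}_{\ell^1_N}+\nrm{v(x)}_{H_n}\le\nrm{f(x)}_{E_n}+\delta$. Then
\[
\nrm{TSf}\le\nrm{TSu}_{L^2(\ell^\infty)}+\nrm{TSv}_{L^2(\ell^\infty)}\le K\nrm{u}_{L^2(\ell^1)}+\sqrt{n+1}\,\nrm{v}_{L^2(H_n)}.
\]
Writing $a=\nrm{u}_{L^2(\ell^1)}$ and $b=\nrm{v}_{L^2(H_n)}$, we have $a+b\le\nrm{f}_{L^2(E_n)}+\delta\le1+\delta$, up to a Minkowski step.

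The obstacle is that this only gives $\nrm{TSf}\le\max(K,\sqrt{n+1})$-type bounds, and combining them with the $E_n$ term naively yields $(\max)^2+(n+1)$ rather than a genuine sum of squares. A cleaner route is to bound $\nrm{TSf}\le Ka+\sqrt{n+1}\,b$ and apply Cauchy--Schwarz: $(Ka+\sqrt{n+1}\,b)^2\le(K^2+(n+1))(a^2+b^2)$, with $a^2+b^2\le(a+b)^2\le1$. Adding the $E_n$ contribution $n+1$ gives
\[
\nrm{Sf}_{L^2(F)}^2\le K^2+2(n+1),
\]
as desired. I expect the delicate points to be the measurable selection and ensuring $a+b\le1$ via the pointwise decomposition. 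Here one uses $\nrm{\nrm{u}_{\ell^1}+\nrm{v}_{H_n}}_{L^2}\ge$ both $a$ and $b$ individually; to get $a^2+b^2\le1$ I would instead use $a^2+b^2\le\nrm{\nrm{u}_{\ell^1}+\nrm{v}_{H_n}}_{L^2}^2$, which follows pointwise from $\alpha^2+\beta^2\le(\alpha+\beta)^2$. Letting $\delta\to0$ concludes.
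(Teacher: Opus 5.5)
Your argument is correct and is essentially the paper's proof. The lower bounds use the same two contractions $\ell^1_N\to F\to\ell^\infty_N$. The upper bounds use the same pointwise splitting $f=u+v$, the bound $\nrm{TSf}\le Ka+\sqrt{n+1}\,b$ followed by Cauchy--Schwarz, and the same estimate of the $E_n$ part through $L^2(H_n)$.

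Two clean-ups are needed. First, delete the intermediate claim $a+b\le1$: Minkowski gives $\nrm{f}_{L^2(E_n)}\le a+b$, which is the wrong direction. Keep only your final pointwise bound $a^2+b^2\le\nrm{f}_{L^2(E_n)}^2$, which is exactly the paper's step. Second, an additive constant slack $\delta$ is not square integrable on $\R$, so make the slack multiplicative, or, as the paper does, take exact minimisers (they exist by compactness) for simple $f$ and conclude by density.
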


\begin{proof}
By \eqref{eq:En-comparison} and the assumptions on $T$,
\begin{equation}\label{eq:F-comparison}
 \frac1{\sqrt{n+1}}\nrm{x}_{H_n}
 \le\nrm{x}_F \le\min\{\nrm{x}_{\ell^1_N},\sqrt{n+1}\nrm{x}_{H_n}\}.
\end{equation}
In particular, the identity from $\ell^1_N$ to $F$ and the map $T:F\to\ell^\infty_N$ are contractions. Their composition is the original operator $T$. Applying these two contractions before and after the defining inequalities for $\beta_{2,F}$ and $\hbar_{2,F}$ proves the lower bounds in \eqref{eq:transfer-bounds}.

Both upper bounds follow from the same estimate for contractive linear operators on scalar-valued $L^2$ spaces. Let $(S,\mathscr A,\mu)$ be a $\sigma$-finite measure space, let $U$ be a contraction on $L^2(S)$, and extend it to square integrable functions on $S$ with values in $\K^N$ by letting it act coordinatewise. Put
\[
 M:=\nrm{TU}_{\LL(L^2(S;\ell^1_N),L^2(S;\ell^\infty_N))}.
\]
Since $H_n$ is a Hilbert space, $U$ is contractive on $L^2(S;H_n)$.

Let $f$ be a simple function supported on a set of finite measure. The infimum defining $\nrm{\cdot}_{E_n}$ is attained by finite-dimensional compactness. Choosing a minimising decomposition for each value of $f$ gives simple functions $u,v$, supported on $\operatorname{supp}f$, such that $f=u+v$ and
\[
 \nrm{u(t)}_{\ell^1_N}+\nrm{v(t)}_{H_n}
 =\nrm{f(t)}_{E_n},\qquad t\in S.
 \]
For $a:=\nrm{u}_{L^2(S;\ell^1_N)}$ and $b:=\nrm{v}_{L^2(S;H_n)}$ we have
\begin{equation*}
 a^2+b^2\le
 \int_S\big(\nrm{u(t)}_{\ell^1_N}+\nrm{v(t)}_{H_n}\big)^2\ud\mu(t)
 =\nrm{f}_{L^2(S;E_n)}^2\le\nrm{f}_{L^2(S;F)}^2.
\end{equation*}
For $v$, the bound on $T:H_n\to\ell^\infty_N$ and the contractivity of $U$ on $L^2(S;H_n)$ give $\nrm{TUv}_{L^2(S;\ell^\infty_N)}\le\sqrt{n+1}\,b$. Combined with the definition of $M$, we obtain
\[
 \nrm{TUf}_{L^2(S;\ell^\infty_N)}
 \le Ma+\sqrt{n+1}\,b \le\sqrt{M^2+n+1}\nrm{f}_{L^2(S;F)}.
\]
For the other part of the $F$-norm, we use
\[
 \nrm{Uf}_{L^2(S;E_n)}
 \le\nrm{Uf}_{L^2(S;H_n)}
 \le\nrm{f}_{L^2(S;H_n)}
 \le\sqrt{n+1}\nrm{f}_{L^2(S;F)}.
\]
Combining the last two bounds gives
\begin{equation}\label{eq:sharp-transfer}
 \nrm{Uf}_{L^2(S;F)}^2 \le  \nrm{TUf}_{L^2(S;\ell^\infty_N)}^2+ \nrm{Uf}_{L^2(S;E_n)}^2 \le\big(M^2+2(n+1)\big)\nrm{f}_{L^2(S;F)}^2.
\end{equation}
By density, this estimate extends to every $f\in L^2(S;F)$.

For $U=H$ on $L^2(\R)$, the scalar isometry gives the required contraction and $M=\hbar_{2,T}$. For
$U=\sum_{k=1}^m\epsilon_k\Delta_k$, the martingale difference projections are mutually orthogonal on scalar $L^2(S)$, so $U$ is again a contraction, and \eqref{eq:operator-supremum} gives
$M\le\beta_{2,T}$. Taking the supremum over these transforms in \eqref{eq:sharp-transfer} proves the upper bounds in \eqref{eq:transfer-bounds}.
\end{proof}

\section{Estimates for the summation and dyadic matrices}
\label{sec:matrix-estimates}

We now estimate the Hilbert transform and UMD constants of $\Sigma_N,D_n:\ell^1_N\to\ell^\infty_N$. The upper bounds for $\beta_{2,\Sigma_N}$ and $\hbar_{2,D_n}$ follow from a common argument based on dyadic block sums. Scalar cancellation estimates control the terms involving different dyadic levels, leading to a cubic estimate for nonnegative inputs. An auxiliary lemma converts this estimate into the required $L^2$ operator bound. The lower bounds follow by testing the operators on explicit step functions. All implicit constants in this section are absolute.

\subsection{Scalar estimates}

Let $\Omega=[0,1)$ with Lebesgue probability measure $\PP$ and the standard dyadic filtration $(\mathscr F_k)_{k\ge0}$, i.e. $\mathscr F_k$ is generated by the intervals $[j2^{-k},(j+1)2^{-k})$, $0\le j<2^k$. Identifying a step function constant on intervals of length $2^{-n}$ with its vector of step values, $E_k$, for $0\le k\le n$, averages the coordinates over each block in $\DD_n$ of cardinality $2^{n-k}$.

For Banach spaces $X,Y$, $T\in\LL(X,Y)$, and $p\in(1,\infty)$, \eqref{eq:operator-supremum} and the mentioned dyadic reduction afterwards show that $\beta_{p,T}$ is the least constant $C$ such that
\begin{equation}\label{eq:dyadic-terminal}
 \Big\lVert\sum_{k=1}^m\epsilon_kT\Delta_kf\Big\rVert_{L^p(\Omega;Y)} \le C\nrm{f}_{L^p(\Omega;X)}
\end{equation}
for all $f\in L^p(\Omega;X)$, $m\ge1$, and scalars $\epsilon_k\in\K$ with $\abs{\epsilon_k}=1$.

The following lemma controls mixed terms that arise when a square is expanded in a cubic estimate, as we will do in the next subsection. The martingale transform is paired with an antisymmetric product, whereas the Hilbert transform is paired with a symmetric product. In the first case, products of martingale increments cancel. In the second, Cotlar's identity replaces the outer Hilbert transform by a difference of products. Both estimates are independent of the number of coordinates.

We use $L^3$ functions because their products lie in $L^{3/2}$ and can therefore be paired with another $L^3$ function in the cubic estimate. The resulting $L^3$ operator bound will yield an $L^{3/2}$ bound by duality and the desired $L^2$ bound by interpolation.

\begin{lemma}\label{lem:cancellation}
Let $m\ge1$ and $\abs{\lambda_1}=\cdots=\abs{\lambda_m}=1$, and set
\begin{equation}\label{eq:real-transform}
 U:=\sum_{k=1}^m\lambda_k\Delta_k.
\end{equation}
Let $d \geq 1$. For all $\mathscr F_m$-measurable functions $u_1,\ldots,u_d,v_1,\ldots,v_d\in L^3(\Omega)$,
\begin{equation}\label{eq:martingale-cancellation}
 \nrm{\big(U(u_jUv_j-v_jUu_j)\big)_{j=1}^d}_{L^{3/2}(\Omega;\ell^1_d)}
 \lesssim\nrm{(u_j)_{j=1}^d}_{L^3(\Omega;\ell^2_d)}
          \nrm{(v_j)_{j=1}^d}_{L^3(\Omega;\ell^2_d)}.
\end{equation}
For all $u_1,\ldots,u_d,v_1,\ldots,v_d\in L^3(\R)$,
\begin{equation}\label{eq:hilbert-cancellation}
 \nrm{\big(H(u_jHv_j+v_jHu_j)\big)_{j=1}^d}_{L^{3/2}(\R;\ell^1_d)}
 \lesssim\nrm{(u_j)_{j=1}^d}_{L^3(\R;\ell^2_d)}
          \nrm{(v_j)_{j=1}^d}_{L^3(\R;\ell^2_d)}.
\end{equation}
\end{lemma}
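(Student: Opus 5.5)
Both estimates follow the same pattern. First we find an algebraic identity that rewrites the bilinear expression inside the outer operator in terms of operators bounded on $L^{3/2}$. Then we pass to the $\ell^1_d$-valued sum using Hölder's inequality in $j$, followed by Hölder's inequality in the variable of integration with exponents $3,3$. The key observation is that for each fixed $j$ we only need bounds of the form $\nrm{\Phi(u_j,v_j)}_{L^{3/2}}\lesssim\nrm{u_j}_{L^3}\nrm{v_j}_{L^3}$. However, summing such scalar bounds over $j$ does not directly give the $\ell^2_d$ norms on the right. So we need pointwise domination, or at least vector-valued bounds (Hilbert-space valued, hence free), for the operators involved.

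\emph{Hilbert case.} Cotlar's identity $(Hf)^2-f^2=2H(fHf)$, polarised with $f=u+v$ and $f=u-v$, gives
\[
H(uHv+vHu)=HuHv-uv.
\]
Hence pointwise
\[
\sum_j\abs{H(u_jHv_j+v_jHu_j)}\le\Big(\sum_j\abs{Hu_j}^2\Big)^{1/2}\Big(\sum_j\abs{Hv_j}^2\Big)^{1/2}+\Big(\sum_j\abs{u_j}^2\Big)^{1/2}\Big(\sum_j\abs{v_j}^2\Big)^{1/2}.
\]
Hölder in $x$ with exponents $3,3$ bounds the $L^{3/2}$ norm. It remains to use the $\ell^2_d$-valued boundedness of $H$ on $L^3$, which holds with an absolute constant because $\ell^2_d$ is a Hilbert space (for example by the Marcinkiewicz--Zygmund extension, or since $\hbar_{3,\ell^2}=\hbar_{3,\K}$). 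In the complex case we apply the identity to real and imaginary parts, or note that it is bilinear and extends $\C$-bilinearly.

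\emph{Martingale case.} Here the analogue of Cotlar's identity comes from the product rule for martingale differences. Write $u=\sum_k du_k$, $v=\sum_k dv_k$ (including $E_0$ terms) and expand $u\,Uv-v\,Uu$. The diagonal terms $k=l$ cancel exactly; this is where antisymmetry enters. For $k<l$, $du_k\,dv_l$ is itself a martingale difference at level $l$. Collecting terms gives
\[
u\,Uv-v\,Uu=\sum_{l}\big(u_{l-1}dv_l-v_{l-1}du_l\big)(\lambda_l-\mu_l)+\ldots,
\]
where $\mu_l$ denotes a paraproduct-type coefficient, a combination of the $\lambda_k$ for $k<l$ weighted by $du_k$. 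The cleaner route is to write $u\,Uv-v\,Uu=\sum_{k<l}(\lambda_l-\lambda_k)(du_k\,dv_l-dv_k\,du_l)$. Since $l>k$, each summand is a level-$l$ martingale difference, so
\[
U(u\,Uv-v\,Uu)=\sum_l\lambda_l\,(\text{those terms})
\]
is a martingale transform of paraproducts $\sum_l a_{l-1}\,db_l$ with $a_{l-1}\in\{u_{l-1},Uu_{l-1},\ldots\}$. Such paraproducts are bounded $L^3\times L^3\to L^{3/2}$ by the Burkholder--Gundy square function estimates together with Doob's maximal inequality, since $S(\sum a_{l-1}db_l)\le a^*\,S(b)$. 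Alternatively, one can write the result as $Uu\,Uv-uv$ minus such paraproduct terms, mirroring the Cotlar identity. We make the $d$-vector version work by running the same argument with $\ell^2_d$-valued $u,v$ and the $\ell^1_d$ product structure. Concretely, we bound
\[
\sum_j\abs{\sum_l a^j_{l-1}db^j_l}
\]
by considering the $\ell^1_d$-valued martingale... A better approach is to note that $\sum_j \lambda$-transforms of $\sum_l a^j_{l-1}db^j_l$ is a scalar paraproduct of the $\ell^2_d$-valued pair $(a,b)$ paired via the inner product, and then use duality against $g=(g_j)\in L^3(\ell^\infty_d)$ with $\abs{g_j}\le1$ absorbed into $a$.

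\textbf{Main obstacle.} The hard step is the martingale case: producing a clean exact identity after the antisymmetric cancellation and proving the paraproduct bound uniformly in $d$ with $\ell^1_d$ on the left. I expect to handle this by dualising the $\ell^1_d$ norm with unimodular $\mathscr F_m$-measurable signs $\sigma_j$, and then applying Hilbert-space-valued square function and Doob estimates, which hold with absolute constants.
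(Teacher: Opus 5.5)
Your Hilbert-transform argument is the paper's: the polarised Cotlar identity, Cauchy--Schwarz in $j$, H\"older with exponents $3,3$, and the $\ell^2_d$-valued extension of $H$ on $L^3$. In the martingale case, your identity $u\,Uv-v\,Uu=\sum_{k<l}(\lambda_l-\lambda_k)(du_k\,dv_l-dv_k\,du_l)$, grouped by $l$, is exactly the paper's increment formula for $w_j=u_jUv_j-v_jUu_j$:
\[
 \Delta_l w_j=\bigl(\lambda_l E_{l-1}u_j-E_{l-1}(Uu_j)\bigr)\Delta_l v_j+\bigl(E_{l-1}(Uv_j)-\lambda_l E_{l-1}v_j\bigr)\Delta_l u_j .
\]
This gives the pointwise bound $S(Uw_j)=S(w_j)\le(u_j^*+(Uu_j)^*)Sv_j+(v_j^*+(Uv_j)^*)Su_j$. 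From there everything on the right lives in $L^3(\Omega;\ell^2_d)$, and the $\ell^2_d$-valued Doob, transform and square-function bounds suffice.

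\textbf{The gap.} It is the step you flagged yourself. In the martingale case you control only the square function of $Uw_j$ pointwise, not $\abs{Uw_j}$. To return to the left-hand side you need a lower square-function (Burkholder--Gundy/Davis type) inequality for $\ell^1_d$-valued martingales, with a constant independent of $d$:
\[
 \nrm{(F_j)_{j=1}^d}_{L^{3/2}(\Omega;\ell^1_d)}\lesssim\nrm{(SF_j)_{j=1}^d}_{L^{3/2}(\Omega;\ell^1_d)},\qquad E_0F_j=0.
\]
This is not a Hilbert-space-valued estimate, so it does not come for free. The paper imports it from \cite{FrizZorinKranich2023}, Lemma~2.6.

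\textbf{Why your proposed substitute does not supply it.}
\begin{itemize}
 \item The unimodular functions $\sigma_j$ with $\sum_j\abs{F_j}=\sum_j\sigma_jF_j$ are only $\mathscr F_m$-measurable, not predictable. So $\sigma_j\sum_l a^j_{l-1}\,db^j_l$ is not a paraproduct, and $\sigma_j$ cannot be absorbed into the coefficients $a^j_{l-1}$.
 \item Absorption works only for constant signs. The scalar paraproduct of the $\ell^2_d$-valued pair, combined with Khintchine, then controls only $\nrm{(F_j)_j}_{L^{3/2}(\Omega;\ell^2_d)}$. That is strictly weaker than the $\ell^1_d$ norm in the statement, and the $\ell^1_d$ norm is essential in Proposition~\ref{prop:upper}, where $(b_I)_I$ is bounded only in $\ell^\infty$ by $g$.
 \item If you dualise honestly against $G\in L^3(\Omega;\ell^\infty_d)$, you get $\E F_j\overline{G_j}=\E\sum_l a^j_{l-1}\,db^j_l\,\overline{dG_{j,l}}$. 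Cauchy--Schwarz then leaves a factor $\sup_j SG_j$. But $\nrm{\sup_jSG_j}_{L^3}\lesssim\nrm{\sup_j\abs{G_j}}_{L^3}$ fails uniformly in $d$. For instance, $d$ independent small-step random walks stopped on leaving $(-1,1)$ have $\sup_j\abs{G_j}\le1$, while $\sup_jSG_j$ is of order $\sqrt{\log d}$.
\end{itemize}
So the Hilbert-valued estimates you list cannot close this step. The $\ell^1_d$-valued lower square-function inequality, or an equivalent substitute, is the missing ingredient.
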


\begin{proof}
We start with some preparatory operator bounds on $L^3(\Omega;\ell^2_d)$.
For a  $\mathscr F_m$-measurable function $f$, write $f^*:=\max_{0\le k\le m}\abs{E_kf}$. The vector-valued Doob inequality \cite[Theorem 3.2.7]{HNVW1} gives, for every family of such functions $f_1,\ldots,f_d\in L^3(\Omega)$,
\begin{equation}\label{eq:vector-maximal}
 \nrm{(f_j^*)_{j=1}^d}_{L^3(\Omega;\ell^2_d)}
 \le\sqrt6\,\nrm{(f_j)_{j=1}^d}_{L^3(\Omega;\ell^2_d)}.
\end{equation}
By \cite[Theorem 4.2.25 and Corollary 4.5.15]{HNVW1}, we have $\nrm{U}_{\LL(L^3(\Omega))}\le \beta_{3,\K}=2$. Combining this with the Hilbert-valued extension theorem \cite[Theorem 2.1.9]{HNVW1} gives
\begin{equation}\label{eq:vector-transform}
 \nrm{(Uf_j)_{j=1}^d}_{L^3(\Omega;\ell^2_d)}
 \le2\,\nrm{(f_j)_{j=1}^d}_{L^3(\Omega;\ell^2_d)}.
\end{equation}
Furthermore, for the square function $Sf:=\big(\sum_{k=1}^m\abs{\Delta_kf}^2\big)^{1/2}$, we have by randomization, Jensen's inequality and \eqref{eq:vector-transform}
\begin{equation}\label{eq:vector-square}
 \nrm{(Sf_j)_{j=1}^d}_{L^3(\Omega;\ell^2_d)}
 \le2\,\nrm{(f_j)_{j=1}^d}_{L^3(\Omega;\ell^2_d)},
 \end{equation}
 and if $E_0f_j=0$ we have by \cite[Lemma 2.6, p.~411]{FrizZorinKranich2023}
 \begin{equation}
\label{eq:vector-square_lower}
 \nrm{(f_j)_{j=1}^d}_{L^{3/2}(\Omega;\ell^1_d)}
 \le24(1+\sqrt2)\,\nrm{(Sf_j)_{j=1}^d}_{L^{3/2}(\Omega;\ell^1_d)},
\end{equation}
where \cite[Lemma 2.4]{FrizZorinKranich2023} supplies the factor $16(1+\sqrt2)$ and Doob's inequality on $L^3$ supplies the factor $3/2$.

Fix $j\in\{1,\ldots,d\}$ and define
\begin{align*}
  u^{(k)}&:=E_ku_j, & v^{(k)}&:=E_kv_j, \\
  a^{(k)}&:=E_k(Uu_j), & b^{(k)}&:=E_k(Uv_j).
\end{align*}
Consider $w^{(k)}=u^{(k)}b^{(k)}-v^{(k)}a^{(k)}$. In the expansion of $w^{(k)}-w^{(k-1)}$, the terms containing products of two increments are $$(u^{(k)}-u^{(k-1)})(b^{(k)}-b^{(k-1)})-(v^{(k)}-v^{(k-1)})(a^{(k)}-a^{(k-1)}).$$ Since $a^{(k)}-a^{(k-1)}=\lambda_k\Delta_ku_j$ and $b^{(k)}-b^{(k-1)}=\lambda_k\Delta_kv_j$, these terms cancel, i.e. $$\Delta_ku_j\,\lambda_k\Delta_kv_j- \Delta_kv_j\,\lambda_k\Delta_ku_j=0.$$
Therefore, we obtain
\[
 w^{(k)}-w^{(k-1)} =(\lambda_ku^{(k-1)}-a^{(k-1)})\Delta_kv_j + (b^{(k-1)}-\lambda_kv^{(k-1)})\Delta_ku_j.
\]
Taking conditional expectations in the preceding identity gives $E_{k-1}w^{(k)}=w^{(k-1)}$, so $(w^{(k)})_{k=0}^m$ is a martingale. Since $E_0U=0$, we have $a^{(0)}=b^{(0)}=0$ and hence $w^{(0)}=0$. Moreover, we have $w^{(m)}=w_j:=u_jUv_j-v_jUu_j$, so $(Uw_j)_{j=1}^d$ is precisely the family we want to estimate in \eqref{eq:martingale-cancellation}.

We have $w^{(k)}=E_kw_j$ and $\Delta_k(Uw_j)=\lambda_k\Delta_kw_j$ and thus
\[
 S(Uw_j)\le S(w_j) \le\big(u_j^*+(Uu_j)^*\big)Sv_j+\big(v_j^*+(Uv_j)^*\big)Su_j.
\]
Since $E_0Uw_j=0$, \eqref{eq:vector-square_lower} gives
 \begin{align*}
 \nrm{(Uw_j)_{j=1}^d}_{L^{3/2}(\Omega;\ell^1_d)}
 \lesssim\nrm{(S(Uw_j))_{j=1}^d}_{L^{3/2}(\Omega;\ell^1_d)}.
 \end{align*}
Combined with the square-function bound,  Cauchy--Schwarz in $j$ and H\"older's inequality in $\Omega$ now give
 \begin{align*}
 \nrm{(S(Uw_j))_{j=1}^d}_{L^{3/2}(\Omega;\ell^1_d)}
 &\le\nrm{(u_j^*+(Uu_j)^*)_{j=1}^d}_{L^3(\Omega;\ell^2_d)}
       \nrm{(Sv_j)_{j=1}^d}_{L^3(\Omega;\ell^2_d)}\\
 &\quad+\nrm{(v_j^*+(Uv_j)^*)_{j=1}^d}_{L^3(\Omega;\ell^2_d)}
       \nrm{(Su_j)_{j=1}^d}_{L^3(\Omega;\ell^2_d)}.
 \end{align*}
To estimate the first term in the first product on the right-hand side, we apply \eqref{eq:vector-maximal} and \eqref{eq:vector-transform} to $(u_j)_{j=1}^d$ to obtain
\begin{align*}
 \nrm{(u_j^*+(Uu_j)^*)_{j=1}^d}_{L^3(\Omega;\ell^2_d)}
 &\le\sqrt6\,\nrm{(u_j)_{j=1}^d}_{L^3(\Omega;\ell^2_d)}
      +\sqrt6\,\nrm{(Uu_j)_{j=1}^d}_{L^3(\Omega;\ell^2_d)}\\
 &\le3\sqrt6\,\nrm{(u_j)_{j=1}^d}_{L^3(\Omega;\ell^2_d)}.
\end{align*} Combined with  \eqref{eq:vector-square}, this yields 
$$
\nrm{(u_j^*+(Uu_j)^*)_{j=1}^d}_{L^3(\Omega;\ell^2_d)}
       \nrm{(Sv_j)_{j=1}^d}_{L^3(\Omega;\ell^2_d)} \leq 6\sqrt6\,\nrm{(u_j)_j}_{L^3(\Omega;\ell^2_d)}\nrm{(v_j)_j}_{L^3(\Omega;\ell^2_d)}
$$
 Interchanging $u_j$ and $v_j$ gives the same bound for the second product.
Finally, using \eqref{eq:vector-square_lower} we can estimate
$$
\nrm{(Uw_j)_j}_{L^{3/2}(\Omega;\ell^1_d)} \leq 24(1+\sqrt2) \cdot 12\sqrt6 \,\nrm{(u_j)_j}_{L^3(\Omega;\ell^2_d)}\nrm{(v_j)_j}_{L^3(\Omega;\ell^2_d)}.
$$

For the Hilbert transform, the corresponding cancellation is the polarised Cotlar identity \cite[(5.1.23)]{Grafakos2014} 
\[
 H(uHv+vHu)=(Hu)(Hv)-uv,
\]
The scalar bound $\nrm{H}_{\LL(L^3(\R))}=\sqrt3$ \cite{Pichorides1972} and the Hilbert-valued extension theorem \cite[Theorem 2.1.9]{HNVW1} give
\[
 \nrm{(Hf_j)_{j=1}^d}_{L^3(\R;\ell^2_d)}
 \le\sqrt3\,\nrm{(f_j)_{j=1}^d}_{L^3(\R;\ell^2_d)}.
\]
Applying the identity coordinatewise, followed by Cauchy--Schwarz in $j$ and H\"older's inequality in $\R$, we obtain as before
\begin{align*}
 \nrm{\big(H(u_jHv_j+v_jHu_j)\big)_{j=1}^d}_{L^{3/2}(\R;\ell^1_d)}
 &=\nrm{\big((Hu_j)(Hv_j)-u_jv_j\big)_{j=1}^d}_{L^{3/2}(\R;\ell^1_d)}\\
 &\le\nrm{(Hu_j)_{j=1}^d}_{L^3(\R;\ell^2_d)}
          \nrm{(Hv_j)_{j=1}^d}_{L^3(\R;\ell^2_d)}
     \\&  \qquad+\nrm{(u_j)_{j=1}^d}_{L^3(\R;\ell^2_d)}
          \nrm{(v_j)_{j=1}^d}_{L^3(\R;\ell^2_d)}\\
 &\le4\,\nrm{(u_j)_{j=1}^d}_{L^3(\R;\ell^2_d)}
             \nrm{(v_j)_{j=1}^d}_{L^3(\R;\ell^2_d)}.
\end{align*}
This proves \eqref{eq:hilbert-cancellation}.
\end{proof}

\subsection{A cubic estimate}

To prove the upper bounds, it will be enough to establish a cubic estimate for nonnegative functions. The following lemma makes this precise: a weighted estimate with constant $B$ gives an operator bound of order $\sqrt B$.

\begin{lemma}\label{lem:cubic}
Let $(S,\mathscr A,\mu)$ be a $\sigma$-finite measure space, and let $V$ be a bounded linear operator on $L^3(S;\ell^2_N)$. Suppose that, for some $B\ge0$ and all $f\in L^3(S;\ell^1_N)$ with $f\ge0$, we have
\begin{equation}\label{eq:cubic-assumption}
 \int_S\sum_{i=1}^N f_i\abs{(Vf)_i}^2\ud\mu
 \le B\nrm{f}_{L^3(S;\ell^1_N)}^3.
\end{equation}
Then $ \nrm{V}_{\LL(L^3(S;\ell^1_N),L^3(S;\ell^\infty_N))}
 \lesssim\sqrt B.
$
\end{lemma}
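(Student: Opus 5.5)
The plan is to polarise the cubic estimate \eqref{eq:cubic-assumption} in order to separate the weight from the function, then use duality between $L^3(S;\ell^1_N)$ and $L^{3/2}(S;\ell^\infty_N)$, and finally pass from nonnegative to general inputs by splitting into positive parts. The standing assumption that $V$ is bounded on $L^3(S;\ell^2_N)$ only serves to ensure that all integrals below are finite. Since $N$ is finite, all the $\ell^q_N$ norms are equivalent, so $Vf$ makes sense for $f\in L^3(S;\ell^1_N)$.

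\emph{Step 1 (polarisation).} Fix nonnegative $f,g\in L^3(S;\ell^1_N)$ and $t>0$. Apply \eqref{eq:cubic-assumption} to $F=f+tg\ge0$. On the left-hand side, drop the contribution of $f$ and keep only the part with weight $tg$. The triangle inequality $\nrm{f+tg}\le\nrm f+t\nrm g$ then gives
\[
 t\int_S\sum_i g_i\abs{(Vf)_i+t(Vg)_i}^2\ud\mu\le B\big(\nrm f+t\nrm g\big)^3 .
\]
Next use $\abs{a}^2\le2\abs{a+tb}^2+2t^2\abs b^2$ together with \eqref{eq:cubic-assumption} applied to $g$ itself. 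This yields
\[
 \int_S\sum_i g_i\abs{(Vf)_i}^2\ud\mu\le \frac{2B(\nrm f+t\nrm g)^3}{t}+2t^2B\nrm g^3 .
\]
Now normalise $\nrm f=\nrm g=1$ and take $t=1$, so that the right-hand side is at most $18B$. By homogeneity (the left-hand side is linear in $g$ and quadratic in $f$), we obtain
\[
 \int_S\sum_i g_i\abs{(Vf)_i}^2\ud\mu\le 18B\nrm f_{L^3(S;\ell^1_N)}^2\nrm g_{L^3(S;\ell^1_N)}
\]
for all nonnegative $f,g$.

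\emph{Step 2 (duality).} Since $N<\infty$, the dual of $L^3(S;\ell^1_N)$ is $L^{3/2}(S;\ell^\infty_N)$. Moreover, for the nonnegative function $\phi=(\abs{(Vf)_i}^2)_i$ the dual norm is attained by taking the supremum over nonnegative $g$ only. Hence
\[
 \nrm{Vf}_{L^3(S;\ell^\infty_N)}^2=\nrm{\phi}_{L^{3/2}(S;\ell^\infty_N)}=\sup_{g\ge0,\ \nrm g\le1}\int_S\sum_i g_i\phi_i\ud\mu\le18B\nrm f^2 .
\]
This proves $\nrm{Vf}_{L^3(S;\ell^\infty_N)}\le\sqrt{18B}\,\nrm f_{L^3(S;\ell^1_N)}$ for $f\ge0$. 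To be rigorous in the case $\sigma$-finite and not finite, one can restrict to $g$ that are simple functions supported on sets of finite measure, which suffices by monotone convergence.

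\emph{Step 3 (general $f$).} Write $f=f_1-f_2+i(f_3-f_4)$, where the $f_k$ are the coordinatewise positive and negative parts of $\operatorname{Re}f$ and $\operatorname{Im}f$ (only two terms occur if $\K=\R$). Each piece satisfies $0\le(f_k)_i\le\abs{f_i}$ pointwise. Since $\ell^1_N$ is a lattice norm, this gives $\nrm{f_k}\le\nrm f$. Linearity of $V$ and Step 2 then give
\[
 \nrm{Vf}_{L^3(S;\ell^\infty_N)}\le4\sqrt{18B}\,\nrm f_{L^3(S;\ell^1_N)}.
\]

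The only genuinely delicate point is Step 1. The cubic hypothesis is a \emph{diagonal} estimate, with the same function appearing as weight and inside $V$, whereas duality requires an \emph{off-diagonal} bilinear bound. The trick of perturbing by $tg$ and discarding the $f$-weighted part is what decouples the two, and it relies crucially on positivity: the discarded term is nonnegative only because the weights are nonnegative. This is why the hypothesis is only needed for $f\ge0$.
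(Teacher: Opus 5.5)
Your proof is correct and is essentially the paper's argument. You perturb $f$ by a multiple of $g$, use positivity to discard the $f$-weighted part, absorb the $Vg$ term with an elementary inequality and the hypothesis applied to $g$, then dualise against nonnegative $g$ and split a general input into four nonnegative pieces. The only difference is the choice of parameters: the paper takes $h=f+g/3$ with $\abs{a-b/3}^2\le\frac98\abs a^2+\abs b^2$, obtaining $9B$ and the final constant $12\sqrt B$, whereas your $t=1$ gives $18B$ and $4\sqrt{18B}$, which is immaterial for the $\lesssim$ statement.
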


\begin{proof}
Write $\mathscr E(f)$ for the integral in \eqref{eq:cubic-assumption}. The difficulty is that the same function $f$ appears both as a weight and inside $Vf$. To separate these roles, let $f,g\ge0$ have $L^3(S;\ell^1_N)$-norm at most one and put $h=f+g/3$. Since $Vf=Vh-(Vg)/3$ and $g_i\le3h_i$, Young's inequality $\abs{a-b/3}^2\le\frac98\abs a^2+\abs b^2$ gives
\begin{align*}
 \int_S\sum_{i=1}^N g_i\abs{(Vf)_i}^2\ud\mu
 &\le\frac{27}{8}\mathscr E(h)+\mathscr E(g)\le\frac{27}{8}\Big(\frac43\Big)^3B+B=9B.
\end{align*}
The left-hand side can now be tested against every nonnegative $g$ in the unit ball. The duality of $L^{3/2}(S;\ell^\infty_N)$ and $L^3(S;\ell^1_N)$ gives
\begin{align*}
 \nrm{Vf}_{L^3(S;\ell^\infty_N)}^2 &=\sup_{\substack{g\ge0\\\nrm{g}_{L^3(S;\ell^1_N)}\le1}}
 \int_S\sum_{i=1}^N g_i\abs{(Vf)_i}^2\ud\mu
 \le9B.
\end{align*}
Taking square roots, splitting an arbitrary complex function into the positive and negative parts of its real and imaginary parts, and rescaling proves the claim, with constant $12\sqrt B$.
\end{proof}

\subsection{Upper bounds}

We prove the upper bounds by treating the $n$ levels of the dyadic tree one at a time. Each level increases the constant in the cubic estimate by at most a fixed amount. Lemma \ref{lem:cubic} then gives an $L^3$ bound of order $\sqrt n$.

\begin{proposition}\label{prop:upper}
For every $n\ge1$, regarding $\Sigma_N$ and $D_n$ as operators from $\ell^1_N$ to $\ell^\infty_N$, we have
\begin{equation*}
 \beta_{2,\Sigma_N}\lesssim\sqrt n, \qquad\hbar_{2,D_n}\lesssim\sqrt n.
\end{equation*}
\end{proposition}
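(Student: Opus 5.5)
The plan is to prove an $L^3$ bound of order $\sqrt n$ for the composed operators $V=\Sigma_NU$, with $U=\sum_k\lambda_k\Delta_k$ as in \eqref{eq:real-transform}, and $V=D_nH$, as maps $L^3(\ell^1_N)\to L^3(\ell^\infty_N)$. The same bound at $L^{3/2}$ will follow by duality, and the $L^2$ bound by interpolation. By \eqref{eq:dyadic-terminal} and the definition of $\hbar_{2,D_n}$, this suffices.

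For the $L^3$ bound I will verify the cubic hypothesis \eqref{eq:cubic-assumption} of Lemma \ref{lem:cubic} with $B\lesssim n$, and then apply that lemma. Fix $f\ge0$ and write $F_I:=\sum_{j\in I}f_j$ for the block sums over $I\in\DD_n$. I will induct on the levels of the tree. Let $I$ be a block with halves $I_-$ (left) and $I_+$ (right). For $i\in I_+$, the restriction of $\Sigma_N$ to $I$ gives
\[
(\Sigma_{I}Uf)_i=UF_{I_-}+(\Sigma_{I_+}Uf)_i.
\]
In the $D_n$ case the analogous relation is $(D_{s}Hf)_i=(D_{s-1}Hf)_i\pm HF_{\text{sibling}}$, as in the recursion \eqref{eq:matricesD}. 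Expanding $f_i\abs{\cdot}^2$ and summing over $i$ produces three kinds of terms.
\begin{enumerate}
\item The inner square term is handled by the induction hypothesis one level down.
\item The diagonal term is $F_{I_+}\abs{UF_{I_-}}^2$. Summed over all sibling pairs at one level, it is at most
\[
\int\Big(\sup_IF_I\Big)\sum_I\abs{UF_I}^2\le\nrm{f}_{L^3(\ell^1)}\nrm{(UF_I)_I}_{L^3(\ell^2)}^2\le4\nrm{f}_{L^3(\ell^1)}^3,
\]
using \eqref{eq:vector-transform}, Hölder, and the fact that the $F_I$ at one level have $\ell^2$ norm at most their $\ell^1$ sum. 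The same holds for $H$, with $\nrm{H}_{\LL(L^3)}=\sqrt3$.
\item The cross term is $2\,\mathrm{Re}\,\overline{UF_{I_-}}\sum_{i\in I_+}f_i(\Sigma_{I_+}Uf)_i$.
\end{enumerate}

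The cross term is where I expect the main difficulty. The plan is to symmetrize. I decompose $2\,\mathrm{Re}\sum_{j\le i}f_i\overline{Uf_j}$ into a symmetric part and an antisymmetric part. The symmetric part is essentially $\mathrm{Re}\,F_{I_+}\overline{UF_{I_+}}$ plus the diagonal terms. The antisymmetric part is built from pieces of the form $f_iUf_j-f_jUf_i$, grouped along the dyadic decomposition into $u\,Uv-v\,Uu$ with $u,v$ block sums. Pairing these pieces with $UF_{I_-}$ and moving $U$ onto them by self-duality ($U^*$ is a transform of the same type with conjugate signs) produces exactly the quantity controlled by \eqref{eq:martingale-cancellation}. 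Summing over the sibling pairs at one level, via the $\ell^1_d$ versus $\ell^2_d$ norms there, again costs $O(\nrm{f}_{L^3(\ell^1)}^3)$. The symmetric part is estimated like the diagonal term.

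For $D_nH$ the roles are reversed. The off-diagonal couplings carry the common sign $(-1)^s$, so what survives is the symmetric combination $u\,Hv+v\,Hu$. Using $H^*=-H$, this is controlled by the Cotlar estimate \eqref{eq:hilbert-cancellation}. The bookkeeping has to be arranged so that each level adds only an absolute constant to $B$, which gives $B\lesssim n+1$. This per-level accounting is the step I expect to be delicate.

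Once $\nrm{V}_{\LL(L^3(\ell^1),L^3(\ell^\infty))}\lesssim\sqrt n$ is established, I pass to $L^{3/2}$ by duality. The adjoint of $\Sigma_NU$ is $\Sigma_N^{\top}U^*$, and $\Sigma_N^{\top}$ is $\Sigma_N$ with the coordinate order reversed; that reversal preserves $\DD_n$ and the whole argument. The adjoint of $D_nH$ is $-D_nH$, since $D_n$ is symmetric and $H^*=-H$. Hence the same bound holds on $L^{3/2}(\ell^1)\to L^{3/2}(\ell^\infty)$. Complex (Riesz--Thorin) interpolation between $L^{3/2}$ and $L^3$ for vector-valued $L^p$ spaces then gives the $L^2$ bound $\lesssim\sqrt n$. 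This bound is uniform in $m$ and in the $\lambda_k$, so taking the supremum yields $\beta_{2,\Sigma_N}\lesssim\sqrt n$ and $\hbar_{2,D_n}\lesssim\sqrt n$.
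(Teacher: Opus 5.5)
Your overall architecture (a cubic estimate via Lemma~\ref{lem:cubic}, cancellation via Lemma~\ref{lem:cancellation}, then duality and Riesz--Thorin) matches the paper's. The square terms, the diagonal pieces and the part $\frac12F_{I_+}UF_{I_+}$ can indeed be bounded level by level. The gap is the per-level accounting you flag as delicate, and it is the heart of the proposition.

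In your bottom-up recursion, the cross term at a node $I$ of level $s-1$ pairs $UF_{I_-}$ with $\sum_{i\in I_+}f_i(\Sigma_{I_+}Uf)_i$. The antisymmetric part of that sum is
\[
 \tfrac12\sum_{J\subseteq I_+}w_J,\qquad w_J:=F_{J_+}UF_{J_-}-F_{J_-}UF_{J_+},
\]
where $J$ runs over the sibling pairs of \emph{all} $n-s$ levels below $I$, not of one level. Suppose you move $U$ onto these pieces and apply \eqref{eq:martingale-cancellation} to this family. The $\ell^2_d$ norms then range over $n-s$ levels, each contributing up to $g^2$ pointwise, so level $s-1$ costs $O((n-s)\nrm{g}_{L^3}^3)$. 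Summing over $s$ gives $B\lesssim n^2$, which yields only the trivial bound of order $n$. An induction that charges this term to the level of $I$ and invokes the hypothesis ``one level down'' would need a depth-independent bound for $\int\overline{UF_{I_-}}\sum_{J\subseteq I_+}w_J$. Lemma~\ref{lem:cancellation} does not give such a bound. The same problem occurs for $D_nH$, where the symmetric pieces $F_{J_-}HF_{J_+}+F_{J_+}HF_{J_-}$ come from every level below the node.

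The missing idea is to charge each $w_J$ to the level of $J$ rather than to that of $I$. Unroll the recursion completely, and for fixed $J$ collect all its partners $F_{I_-}$ over the ancestors $I$ with $J\subseteq I_+$. Once $U$ has been moved onto $w_J$, these partners enter only through $\int F_{I_-}\abs{U\overline{w_J}}$. They are disjoint block sums whose union is $\{j<\min J\}$, so their total is at most $g$ pointwise. For $D_n$ the partners are the siblings of the ancestors of $J$, which are again disjoint. Each level of $J$ then costs $\lesssim\nrm{g}_{L^3}^3$ by Lemma~\ref{lem:cancellation}, with $d$ the number of blocks at that level, and $B\lesssim n$ follows.

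This is exactly what the paper's top-down energy $\mathscr E_s=\int\sum_if_i\abs{Rq_s(i)}^2$ builds in. There $q_s(i)$ is the contribution from outside $I_s(i)$; it is constant, equal to $b_I$, on each block and satisfies $\abs{b_I}\le g$. Consequently the increment $\mathscr E_s-\mathscr E_{s-1}$ involves only the single-level products $w_I$. The remaining steps of your plan are fine as stated: the diagonal and symmetric parts bounded level by level, duality via coordinate reversal for $\Sigma_N^{\mathsf T}$, and interpolation.
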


\begin{proof}
Let $K_N=(\sgn(i-j))_{i,j=1}^N$ and consider the two cases:
\begin{itemize}
 \item $R=U$, with $U$ as in \eqref{eq:real-transform}, $T=K_N$ and $\eta=-1$, with $(S,\mathscr A,\mu)=(\Omega,\mathscr F_m,\PP)$;
 \item $R=H$, $T=D_n-I$ and $\eta=1$, with $S=\R$ equipped with Lebesgue measure.
\end{itemize}
The small corrections needed to recover $\Sigma_N$ from $K_N$ and $D_n$ from $D_n-I$ will be handled at the end.

For $0\le s\le n$, put \[\DD_{n,s} :=\{I\in\DD_n:\abs I=2^{n-s}\},\]  and let $I_s(i)$ be its member containing $i$. Fix $f=(f_i)_{i=1}^N\in L^3(S;\ell^1_N)$ with $f\ge0$, and write $g=\sum_{i=1}^Nf_i$ and $f_I=\sum_{i\in I}f_i$. Positivity gives the pointwise identity $\nrm{f}_{\ell^1_N}=g$. For $T=(t_{ij})_{i,j=1}^N$, define
\[
 q_s(i):=\sum_{j\notin I_s(i)}t_{ij}f_j,
 \qquad
 \mathscr E_s:=\int_S\sum_{i=1}^Nf_i\abs{Rq_s(i)}^2\ud\mu.
\]
The function $q_s(i)$ contains the part of the $i$th matrix output coming from outside its current block. At level zero there is no such contribution, so $q_0=0$. At level $n$, the only excluded index is $i$ itself. Indeed, since the diagonal of $T$ is zero, $q_n=Tf$. Thus $\mathscr E_0=0$ and $\mathscr E_n$ is the cubic expression we want to estimate. Moreover,  $\abs{q_s(i)}\le g$ at every level, since $\abs{t_{ij}}\le1$.

Fix $1\le s\le n$ and a parent block $I\in\DD_{n,s-1}$. The function $q_{s-1}(i)$ has a common value $b_I$ for all $i\in I$. Indeed, outside $I$, every row indexed by $I$ sees the same coefficients. For $K_N$ this follows from the ordering of the indices, and for $D_n-I$ it follows from \eqref{eq:matricesD}.
In particular $\abs{b_I}\le g$. 
Let $I_-,I_+$ be the left and right children of $I$, and put
$u_I=f_{I_-}$ and $v_I=f_{I_+}$. Passing to level $s$ adds exactly the contribution of the sibling block. The entries with row index in $I_-$ and column index in $I_+$ have the constant value $c_s=-1$ in the first case and $c_s=(-1)^{n-s+1}$ in the second; the reverse entries have value $\eta c_s$, since $T^{\mathsf T}=\eta T$. Hence
\[
 q_s(i)=
 \begin{cases}
 b_I+c_sv_I,&i\in I_-,\\
 b_I+\eta c_su_I,&i\in I_+.
 \end{cases}
\]
Set $w_I=u_IRv_I+\eta v_IRu_I$. When we expand the squares, the terms involving only $Rb_I$ cancel against $\mathscr E_{s-1}$. Indeed, the new square terms and the cross terms give
\begin{equation}\label{eq:energy-increment}
\begin{split}
 \mathscr E_s-\mathscr E_{s-1}
 &=\int_S\sum_I\big(u_I\abs{Rv_I}^2+v_I\abs{Ru_I}^2\big)\ud\mu\\
 &\quad+2c_s\operatorname{Re}\int_S\sum_I(Rb_I)\overline{w_I}\ud\mu,
\end{split}
\end{equation}
where both sums run over $I\in\DD_{n,s-1}$.

The children partition all the indices. Since $u_I,v_I\le g$, the first integral is bounded by
\[
 \begin{split}
 \int_Sg\sum_{J\in\DD_{n,s}}\abs{Rf_J}^2\ud\mu
 &\le\nrm{g}_{L^3(S)}
    \nrm{(Rf_J)_{J\in\DD_{n,s}}}_{L^3(S;\ell^2(\DD_{n,s}))}^2\\
 &\lesssim\nrm{g}_{L^3(S)}
    \nrm{(f_J)_{J\in\DD_{n,s}}}_{L^3(S;\ell^2(\DD_{n,s}))}^2
 \le\nrm{g}_{L^3(S)}^3.
 \end{split}
\]
Here we used the Hilbert-valued extension theorem \cite[Theorem 2.1.9]{HNVW1} and scalar boundedness. The final inequality uses $\sum_{J\in\DD_{n,s}}f_J=g$ and nonnegativity.

For the cross term, estimating $Rb_I$ directly would retain the effect of all previous levels. Instead, use $\int_S(Ra)b\ud\mu=-\eta\int_Sa(Rb)\ud\mu$ to move $R$ onto $\overline{w_I}$. This gives
\[
 \begin{split}
 \Big\lvert\int_S\sum_I(Rb_I)\overline{w_I}\ud\mu\Big\rvert
 &=\Big\lvert\int_S\sum_Ib_IR\overline{w_I}\ud\mu\Big\rvert\\
 &\le\nrm{g}_{L^3(S)}
      \nrm{(R\overline{w_I})_I}_{L^{3/2}(S;\ell^1(\DD_{n,s-1}))}\\
 &\lesssim\nrm{g}_{L^3(S)}
      \nrm{(u_I)_I}_{L^3(S;\ell^2(\DD_{n,s-1}))}
      \nrm{(v_I)_I}_{L^3(S;\ell^2(\DD_{n,s-1}))}
 \le\nrm{g}_{L^3(S)}^3.
 \end{split}
\]
Here the proof of Lemma \ref{lem:cancellation} also bounds $U\overline{w_I}$, since $S(U\overline{w_I})\le S(w_I)$, while $w_I$ is real in the Hilbert case.
The last inequality again follows from positivity and $\sum_I(u_I+v_I)=g$. Summing \eqref{eq:energy-increment} over
the $n$ levels and using $RT=TR$ gives
\begin{equation*}
 \int_S\sum_{i=1}^Nf_i\abs{(TRf)_i}^2\ud\mu \lesssim n\nrm{f}_{L^3(S;\ell^1_N)}^3.
\end{equation*}
Since $R$ is bounded on $L^3(S;\ell^2_N)$ and $T$ is a finite matrix, $V=TR$ satisfies the boundedness assumption of Lemma \ref{lem:cubic}, which, with $B$ of order $n$, gives
\[
 \nrm{TR}_{\LL(L^3(S;\ell^1_N),L^3(S;\ell^\infty_N))} \lesssim\sqrt n.
\]
Duality gives the same bound for exponent $3/2$. Indeed, the dual operator has the same structure: $T^{\mathsf T}=\eta T$, and the scalar transform is either a martingale transform with multipliers $\overline{\lambda_k}$ or $-H$. The Riesz--Thorin theorem \cite[Theorem 2.2.1]{HNVW1}, with parameter $1/2$, now gives
\[
 \nrm{TR}_{\LL(L^2(S;\ell^1_N),L^2(S;\ell^\infty_N))} \lesssim\sqrt n.
\]

It remains to put back the elementary pieces removed at the start. We note that
\[
 \Sigma_N=\tfrac12(J_N+I+K_N),\qquad D_n=I+(D_n-I).
\]
On $L^2(\Omega)$, the orthogonality of the projections $\Delta_k$ gives $\nrm{U}_{L^2(\Omega) \to L^2(\Omega)}\le1$. The Hilbert transform is an isometry on $L^2(\R)$, see \cite[Section 5.2.a]{HNVW1}.
The contractions $\ell^1_N\hookrightarrow\ell^2_N \hookrightarrow\ell^\infty_N$, together with scalar $L^2$ contractivity, show that $R$ acts with norm at most one from $L^2(S;\ell^1_N)$ to $L^2(S;\ell^\infty_N)$. Also
\[
 \nrm{J_NUf}_{L^2(\Omega;\ell^\infty_N)}
 =\Big\lVert U\sum_{j=1}^Nf_j\Big\rVert_{L^2(\Omega)}
 \le\nrm{f}_{L^2(\Omega;\ell^1_N)}.
\]
These bounds prove the Hilbert transform estimate and the summation estimate for all unimodular multipliers. Finally, a general input on $\Omega$ may be replaced by $E_mf$, which is contractive and leaves $Uf$ unchanged. The estimate is uniform in $m$ and the multipliers, so \eqref{eq:dyadic-terminal} proves the UMD bound.
\end{proof}

\subsection{Lower bounds}
Both lower bounds use the step function that equals the \(j\)th standard basis vector $e_j$ on the \(j\)th of the \(N\) equal subintervals of \([0,1)\). For $\Sigma_N$, the calculation reduces to Hilbert transforms of interval indicators. For $D_n$, we choose the martingale signs so that the contributions from the $n$ dyadic levels have the same sign.

\begin{lemma}\label{lem:lower}
For every $n\ge1$, regarding $\Sigma_N$ and $D_n$ as operators from  $\ell^1_N$ to $\ell^\infty_N$, we have
\begin{equation*}
 \hbar_{2,\Sigma_N}\ge\frac n7, \qquad\beta_{2,D_n}\ge\frac{2n}{3}.
\end{equation*}
\end{lemma}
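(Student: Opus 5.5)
For both bounds I would use the test function from the preamble: $f(t):=\sum_{j=1}^N\mathbf 1_{I_j}(t)\,e_j$, where $I_j=[(j-1)/N,j/N)$. Since $\nrm{f(t)}_{\ell^1_N}=\mathbf 1_{[0,1)}(t)$, we have $\nrm{f}_{L^2(\R;\ell^1_N)}=\nrm{f}_{L^2(\Omega;\ell^1_N)}=1$. A single vector $f$ suffices for both bounds because $T$ acts coordinatewise on the values. For any matrix $T$, the $i$th coordinate of $Tf$ is the scalar step function $\varphi_i(t)=t_{i,j(t)}$, where $j(t)$ is the index with $t\in I_{j(t)}$. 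Any scalar transform $R$ then acts by $(R\,Tf)_i=R\varphi_i$. So the task is to show that $\nrm{\max_i\abs{R\varphi_i}}_{L^2}$ is large.

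\emph{The bound for $\hbar_{2,\Sigma_N}$.} Here $\varphi_i=\mathbf 1_{[0,i/N)}$. By the explicit formula
\[
H\mathbf 1_{[a,b)}(x)=\tfrac1\pi\log\frac{\abs{x-a}}{\abs{x-b}},
\]
we get $(H\Sigma_Nf)_i(x)=\frac1\pi\log\frac{x}{\abs{x-i/N}}$ for $x>0$. For $x\in I_k$ I would take $i=k-1$, or $i=k$, whichever endpoint is closer to $x$. This gives the pointwise bound
\[
\max_i\abs{(H\Sigma_Nf)_i(x)}\ge\tfrac1\pi\log\bigl(2Nx\bigr)\quad\text{for }x\in[1/N,1).
\]
Restrict to $x\in[1/2,1)$, where the right-hand side is at least $\frac{n\log 2}{\pi}$. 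This already gives an $L^2$ norm of at least $\frac{\log2}{\pi\sqrt2}\,n\approx0.156\,n$, which exceeds $n/7$. The one delicate step is the choice of the nearest endpoint, which avoids losing an additive constant. Small $n$ needs only a quick check.

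\emph{The bound for $\beta_{2,D_n}$.} Fix $i$ and set $A_s:=$ the average of $\varphi_i$ over $I_s(i)$, writing $I_s(i)$ also for the corresponding interval of $[0,1)$. The structure of $D_n$ from the proof of Lemma \ref{lem:geometry} gives $\varphi_i=1$ on $I_n(i)$ and $\varphi_i=(-1)^{n-s+1}$ on $I_{s-1}(i)\setminus I_s(i)$. This yields the recursion
\[
A_n=1,\qquad A_{s-1}=\tfrac12\bigl(A_s+(-1)^{n-s+1}\bigr),
\]
and on $I_n(i)$ we have $\Delta_k\varphi_i=A_k-A_{k-1}=\tfrac12\bigl(A_k-(-1)^{n-k+1}\bigr)$. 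The recursion does not depend on $i$, so neither do $A_s$ or these differences. Consequently I can choose $\epsilon_k:=\sgn(A_k-A_{k-1})$, a common choice for all $i$. Then at every $t\in I_i$,
\[
\Bigl(\sum_k\epsilon_k D_n\Delta_kf\Bigr)_i(t)=\sum_{k=1}^n\abs{A_k-A_{k-1}}.
\]
Taking this particular $i$ bounds the $\ell^\infty_N$ norm from below on all of $[0,1)$. Solving the linear recursion explicitly gives
\[
A_s=\tfrac13(-1)^{n-s}+\text{(geometric error in }(-\tfrac12)^{n-s}).
\]
From this, $A_k$ and $-(-1)^{n-k+1}$ have the same sign (for instance $A_n=1$, $A_{n-1}=0$, $A_{n-2}=\tfrac12$, $A_{n-3}=-\tfrac14$). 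Hence $\abs{A_k-A_{k-1}}=\tfrac12(\abs{A_k}+1)$.

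The main bookkeeping obstacle is summing these terms exactly to obtain at least $2n/3$, rather than $2n/3$ minus a constant. I would use the closed form of $A_s$ to check that the geometric errors enter with a nonnegative total contribution: the $k=n$ term equals $1$, above $2/3$, which should compensate any later deficits. This yields $\beta_{2,D_n}\ge 2n/3$ via \eqref{eq:dyadic-terminal}.
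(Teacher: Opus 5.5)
Your proposal is correct and is essentially the paper's proof. It uses the same test function and the same reduction to $H\mathbf 1_{[0,i/N)}$ with a coordinate whose endpoint sits next to $t$; your nearest-endpoint choice on $[1/2,1)$ even gives $\frac{\log 2}{\pi\sqrt2}\,n\ge\frac n7$ for every $n\ge1$, so the paper's separate check for $n\le 7$ is unnecessary. For $D_n$, your averages are the paper's row averages, $A_s=\rho_{n-s}$, and your signs $\sgn(A_k-A_{k-1})$ are exactly the paper's $(-1)^{n-k}$.

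The bookkeeping you left open closes at once. Since $A_k=(-1)^{n-k}\bigl(\frac13+\frac23(-\frac12)^{n-k}\bigr)$, you get $\frac12(\abs{A_k}+1)=\frac23+\frac13(-\frac12)^{n-k}$, and summing over $k$ yields $\frac{2n}3+\frac29\bigl(1-(-\frac12)^n\bigr)\ge\frac{2n}3$, which is precisely the paper's identity.
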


\begin{proof}
Both bounds use
\[
 f(t):=\sum_{j=1}^Ne_j\mathbf1_{[(j-1)/N,j/N)}(t),
\]
extended by zero outside $[0,1)$, which has $L^2(\R;\ell^1_N)$-norm one. Although this function is not smooth, it is admissible in the $L^2$ extension of \eqref{eq:operator-hilbert}.

The summation matrix joins the first $i$ coordinate intervals, so $(\Sigma_Nf)_i=\mathbf1_{[0,i/N)}$. Integration gives for $a>0$
\[
 H\mathbf1_{[0,a)}(t)=\frac1\pi\log\Big\lvert\frac{t}{t-a}\Big\rvert,
 \qquad \quad t\ne0,a.
\]
For $0<t<a$, the two terms at the truncated singularity cancel in the principal value. For $N\ge8$ and almost every $t\in[1/4,1]$, choose $i=\lfloor Nt\rfloor$. Then $1\le i\le N$ and $0<t-i/N<1/N$, whence
\[
 \nrm{\Sigma_NHf(t)}_{\ell^\infty_N} \ge\frac1\pi\log\frac{t}{t-i/N} \ge\frac1\pi\log(N/4).
\]
The maximum over coordinates is essential: it selects the endpoint $i/N$ immediately to the left of $t$. Thus the logarithmic lower bound holds throughout an interval of fixed positive measure. Integrating over $[1/4,1]$ gives $$\hbar_{2,\Sigma_N}\ge\frac{\sqrt3}{2\pi}\log(N/4)=\frac{\sqrt3\log2}{2\pi}(n-2)\ge\frac{n}7, \qquad n\ge8.$$ For $1\le n\le7$, a scalar test in the first coordinate gives $\hbar_{2,\Sigma_N}\ge1\ge n/7$.

For the UMD bound, restrict $f$ to $\Omega$ and take
\[
 U=\sum_{k=1}^n(-1)^{n-k}\Delta_k.
\]
All row averages of $D_s$ coincide. Denote their common value by $\rho_s$. A row of $D_s$ spends half its entries in a copy of $D_{s-1}$ and the other half in a constant block of sign $(-1)^s$.
Thus \eqref{eq:matricesD} gives
\[
 \rho_0=1,\qquad
 \rho_s=\frac{\rho_{s-1}+(-1)^s}{2} =\frac{(-1)^s+2^{1-s}}3,
 \qquad1\le s\le n,
\]
where the last equality follows by induction. If $f(t)=e_i$, we have
\[
 E_kf(t)=\frac1{\abs{I_k(i)}}\sum_{j\in I_k(i)}e_j.
\]
The corresponding diagonal block of $D_n$ is $D_{n-k}$, so $(D_n E_kf(t))_i=\rho_{n-k}$. The alternating signs in $U$ match the alternating principal term in these row averages. Thus,
\begin{align*}
 (D_nUf(t))_i
 &=\sum_{k=1}^n(-1)^{n-k}(\rho_{n-k}-\rho_{n-k+1})\\
 &=\frac{2n}3+\frac13\sum_{s=0}^{n-1}(-1/2)^s
 =\frac{2n}3+\frac29\big(1-(-1/2)^n\big) \ge\frac{2n}3.
 \end{align*}
There is therefore a contribution of $2/3$ at each scale, up to a bounded geometric-series correction. Since  $\nrm{f}_{L^2(\Omega;\ell^1_N)}=1$, \eqref{eq:dyadic-terminal} proves the UMD bound. Both the test functions and the transforms are real, so the same lower bounds hold over either scalar field.
\end{proof}

\begin{proof}[Proof of Theorem \ref{thm:main}]
Propositions \ref{prop:transfer} and \ref{prop:upper} give
\[
 \beta_{2,X_n}\lesssim\sqrt n, \qquad\hbar_{2,Y_n}\lesssim\sqrt n.
\]
The lower bounds in Proposition \ref{prop:transfer} and Lemma \ref{lem:lower} give
\[
 \hbar_{2,X_n}\gtrsim n, \qquad\beta_{2,Y_n}\gtrsim n.
\]
These are the four estimates for which the constructions were designed. The classical quadratic comparisons supply their reverse inequalities. For example, $\hbar_{2,X_n}\lesssim (\beta_{2,X_n})^2$ and the lower bound for $\hbar_{2,X_n}$ imply $\beta_{2,X_n}\gtrsim\sqrt n$; the upper bound for $\beta_{2,X_n}$ gives $\hbar_{2,X_n}\lesssim n$. Similarly, $\beta_{2,Y_n}\lesssim(\hbar_{2,Y_n})^2$ gives $\hbar_{2,Y_n}\gtrsim\sqrt n$ and $\beta_{2,Y_n} \lesssim n$. This proves all the assertions.
\end{proof}

\section*{AI disclosure statement}

The examples proving the quadratic dependencies were found in conversations with OpenAI's Astra model. All statements and proofs in the final manuscript have been reviewed by the authors and subsequently proof-checked in Lean 4 using OpenAI's Astra model.

\bibliographystyle{abbrv}
\bibliography{Hilbert-and-UMD-references}

\end{document}